\def\N{\mathbb{N}}
\def\Z{\mathbb{Z}}
\def\R{\mathbb{R}}
\def\Hiota{\hat{\iota}}
\def\HHiota{\hat{\vphantom{\rule{1pt}{5.5pt}}\smash{\hat{\iota}}}}
\newtheorem{thm}{Theorem}
\newtheorem*{thm*}{Theorem}
\newtheorem*{claim*}{Claim}
\newtheorem*{dfn*}{Definition}
\newtheorem{lemma}[thm]{Lemma}
\newtheorem*{lemma*}{Lemma}
\newtheorem{prop}[thm]{Proposition}
\newtheorem*{prop*}{Proposition}
\newtheorem{cor}[thm]{Corollary}
\newtheorem*{cor*}{Corollary}
\newtheorem*{conj*}{Conjecture}
\newtheorem*{quest*}{Question}
\newtheorem*{exer*}{Exercise}
\theoremstyle{remark}
\newtheorem*{rmk*}{Remark}
\newtheorem*{example*}{Example}
\newtheorem*{examples*}{Examples}
\newtheorem*{rmks*}{Remarks}
\title{
The Writhe of Permutations \\
and Random Framed Knots
}
\author{Chaim Even-Zohar\thanks{
Department of Mathematics, Hebrew University, Jerusalem 91904, Israel. \newline
e-mail: \href{mailto:chaim.evenzohar@mail.huji.ac.il}{chaim.evenzohar@mail.huji.ac.il}~.}}
\begin{document}

\maketitle

\begin{abstract}
We introduce and study the \emph{writhe} of a permutation, a circular variant of the well-known inversion number. This simple permutation statistics has several interpretations, which lead to some interesting properties. For a permutation sampled uniformly at random, we study the asymptotics of the writhe, and obtain a non-Gaussian limit distribution.

This work is motivated by the study of random knots. A model for random framed knots is described, which refines the Petaluma model, studied in~\cite{petaluma}. The distribution of the framing in this model is equivalent to the writhe of random permutations.
\end{abstract}

\section{Introduction}

As usual we denote $\Z_{2n+1} = \Z / (2n+1)\Z$. Let $f: \Z_{2n+1}\to\mathbb R$ be one-to-one. We define the \emph{writhe} of $f$ as follows.
$$ w(f) \;=\; \sum\limits_{i=0}^{2n} \sum\limits_{j=1}^{n} \text{sign}\left(f(i+j)-f(i)\right) $$
where $\text{sign}(x)=+1$ if $x>0$ and $-1$ if $x<0$.

Let's examine this definition. We sum over all pairs $x,y \in \Z_{2n+1}$ and compare the values of $f(x)$ and $f(y)$. Viewing $\Z_{2n+1}$ as a set of points evenly distributed on the unit circle, say clockwise, we may ``connect'' $x$ and $y$ through the shorter arc. Then we add one if $f$ is clockwise increasing on the pair $(x,y)$, and subtract one if it's decreasing. Intuitively, the writhe measures to what extent a function on the discrete circle is clockwise increasing or decreasing.

Note that in this definition only the relative ordering of the values in $\text{Image}(f)$ matters, and not the actual values. Hence it is natural to study the writhe for all one-to-one functions \mbox{$f:\{0,\dots,2n\} \to \{0,\dots,2n\}$}. The main subject of this paper is the distribution of the writhe for uniformly sampled permutations in $S_{2n+1}$.

Both the term writhe and our interest in the subject originated in knot theory. In~\cite{petaluma}, we study a model for random knots, in which a permutation $\pi \in S_{2n+1}$ defines a knot, through a petal diagram, from~\cite{adams2015knot}. In Section~\ref{section-knots}, we refine this random model to deal with \emph{framed} knots, and observe that the knot's \emph{framing} is equal to the writhe of $\pi$.

However, writhe seems to carry an independent interest as a combinatorial concept. Note its similarity with the inversion number of a permutation, $\text{inv}(\pi) = \#\{(x,y) : x<y,\;\pi(x)>\pi(y)\}$, which plays a central role in enumerative combinatorics as well as in sorting and ranking algorithms. In a sense, $\text{inv}(\pi)$ measures the distance of $\pi$ from the identity. For example, the Mallows model is a well-known non-uniform distribution of permutations, in which the probability of a permutation $\pi$ is proportional with $q^{\text{inv}(\pi)}$, for some $q>0$~\cite{mallows1957non,diaconis2000analysis,borodin2010adding}. 

In a similar spirit, the writhe quantifies how shuffled a given permutation $\pi:\Z_{2n+1}\to\Z_{2n+1}$ is. It is easily verified that the writhe is invariant under rotations from both sides. Namely, $w(\pi) = w(\pi \circ \rho) = w(\rho \circ \pi)$ where $\rho(x) = x + 1\mod (2n+1)$. Indeed the appropriate geometry for the writhe is that of the circle $\Z_{2n+1}$, whereas $\text{inv}(\pi)$ suits a linear order. 

More generally, it is interesting to study the distribution of \emph{graphical inversion numbers}, introduced by Foata and Zeilberger~\cite{foata1996graphical}. A permutation statistic $\text{inv}_G$ is associated to a directed graph $G$ with vertices~$\{1,\dots,k\}$. For $\pi \in S_k$, $\text{inv}_G(\pi)$ is the number of edges $u \to v$ such that $\pi(u)>\pi(v)$. Here are a few examples.
\begin{itemize}
\item 
For a complete transitive graph $G$, $\text{inv}_G$ is the usual inversion number. It follows the \mbox{\emph{Mahonian}} distribution which is asymptotically normal, as described in Section~\ref{section-properties-1}.
\item 
An oriented path or cycle corresponds to the permutation \emph{descent number}. Its distribution is given by the \emph{Eulerian numbers}, cf.~Section~\ref{section-moments-2}, with normal asymptotics too~\cite{carlitz1972asymptotic}.
\item 
For a complete bipartite $G=(U,V,E)$, $E=\left\{u \to v \;:\; u \in U,\, v \in V \right\}$, or $U \rightrightarrows V$ for short, $\text{inv}_G$ is the Mann--Whitney statistic, asymptotically normal as well~\cite{mann1947test}.
\item 
Bisect $U$ and $V$, and consider $U_1 \rightrightarrows V_1 \rightrightarrows U_2 \rightrightarrows V_2 \rightrightarrows U_1$. Now $\text{inv}_G$ is asymptotically \emph{logistic} with density $f(x) = \tfrac12 \,\text{sech}\,^2 x$. See~\cite{petaluma} and Proposition~\ref{compwrithe} below.
\item 
Finally, the writhe $w(\pi)$ for $\pi \in S_{2n+1}$ is equal to $(2n+1)n - 2\, \text{inv}_G(\pi)$, where $G$ is the \emph{clockwise tournament} on $\Z_{2n+1}$ with edge set $\left\{i \to i+j \;:\; i \in \Z_{2n+1},\, 1 \leq j \leq n \right\}$. 
\end{itemize}
A~related notion of \emph{weighted inversion numbers} was considered by Kadell~\cite{kadell1985weighted}.

In Section~\ref{section-properties} we discuss the relation of the writhe to other combinatorial notions, such as the \emph{alternating inversion number}~\cite{chebikin2008variations}, and \emph{circular bubble sort}~\cite{jerrum1985complexity,van2014upper}. We also fit the writhe into the context of nonparametric statistics for \emph{circular rank correlation}~\cite{fisher1982nonparametric,fisher1995statistical}. An algorithm to compute the writhe in time $O(n\log n)$ is described, inspired by notions from knot theory. 

\bigskip

The main result of this work is the distribution of the writhe of a uniformly chosen random permutation from $S_{2n+1}$. Although the writhe takes values in $[-n^2, n^2]$, we show that typically it is linear in $n$, in a strong sense which we turn to describe. 

We think of the writhe as a random variable and consider its moments $E\left[w^k\right]$. All odd moments vanish, since reversing $\pi$ flips the writhe's sign. We determine the asymptotics of the $k$th moment for even $k$. 
\begin{thm}\label{moments}
Let $\pi_{2n+1} \in S_{2n+1}$ be drawn uniformly at random. Then
$$ E\left[\frac{w(\pi_{2n+1})^k}{n^k}\right] \;\;\xrightarrow[]{\;n \to \infty\;}\;\; \sum_{\sigma \in S_k} \;\prod_{\gamma \in \text{cycles}(\sigma)} \lambda_{|\gamma|}\;, $$
where for even $m$
$$ \lambda_m \;=\; \frac{8^m(2^m-1)B_m^2}{2(m!)^2}\;, $$
and $\lambda_m=0$ for odd $m$. $B_m$ stands for the $m$th Bernoulli number.
\end{thm}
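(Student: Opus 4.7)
The plan is to compute $E[w^k]$ by direct expansion and classify terms by combinatorial type. Writing $w = \sum_\alpha X_\alpha$ with $\alpha = (i,j)$ indexing arcs ($i \in \Z_{2n+1}$, $1 \le j \le n$) and $X_\alpha = \text{sign}(\pi(i+j) - \pi(i))$, we have
\[
E[w^k] \;=\; \sum_{(\alpha_1,\ldots,\alpha_k)} E\!\left[\prod_{\ell=1}^k X_{\alpha_\ell}\right],
\]
and each term depends on the $k$-tuple only through its \emph{type} $T$: the partition of the $2k$ endpoint slots by coincidence, together with the cyclic order on $\Z_{2n+1}$ of the resulting $V$ distinct positions. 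Grouping by type gives $E[w^k] = \sum_T N_n(T)\,e(T)$, where $N_n(T) = \Theta(n^V)$ counts embeddings into $\Z_{2n+1}$ (subject to arc lengths in $\{1,\ldots,n\}$) and $e(T) = \tfrac{1}{V!}\sum_{\tau \in S_V}\prod_\ell \text{sign}(\tau(b_\ell)-\tau(a_\ell))$ is the averaged sign product over uniform rankings at the $V$ distinct positions.

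The next step identifies the dominant types. Passing from joint moments to joint cumulants of the $X_\alpha$'s, a clustering argument shows that cumulant contributions factor across connected components of the arc-endpoint graph, and only connected types contribute to their natural order $n^V$. Dimension balance then forces the types contributing to the $n^k$ order to be those in which the arc-endpoint graph is a disjoint union of arc-cycles with $V = k$ vertices, each of degree two. Each such type is naturally encoded by a permutation $\sigma \in S_k$ whose cycles record which arcs form each loop, and the moment-cumulant formula reassembles $E[w^k]/n^k \to \sum_{\sigma \in S_k} \prod_\gamma \lambda_{|\gamma|}$, with $\lambda_m$ the per-cycle contribution. Since $\pi \mapsto \pi_{\text{rev}}$ flips the writhe's sign, all odd moments---and hence $\lambda_m$ for odd $m$---vanish.

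For even $m$, $\lambda_m$ is extracted from a single arc-cycle of length $m$ on $m$ distinct positions. The embedding count becomes a lattice sum over length tuples $(\delta_1,\ldots,\delta_m) \in \{1,\ldots,n\}^m$ with $\sum \delta_\ell \in (2n+1)\Z$, and the cyclic sign-sum is $\tfrac{1}{m!}\sum_{\tau \in S_m}(-1)^{d(\tau)}$, where $d(\tau)$ is the cyclic descent number. Handling the divisibility constraint via Fourier characters on $\Z/(2n+1)$ and taking the $n \to \infty$ limit reduces the computation to a Dirichlet-type series involving $(2^m-1)\zeta(m)^2$, which via the standard identity $\zeta(m) = (-1)^{m/2+1}(2\pi)^m B_m/(2\cdot m!)$ evaluates to the stated closed form $\lambda_m = 8^m(2^m-1)B_m^2/[2(m!)^2]$.

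The main obstacle is the clustering step: because $\pi$-values at distinct positions are not independent (they must be distinct), the classical cumulant factorization for i.i.d.\ variables does not apply. One must instead show that the joint distribution of $\pi$-rankings on disjoint endpoint-blocks factorizes up to $O(1/n)$ total-variation corrections, and then carefully track those corrections through the type sum so as to preserve the leading $\Theta(n^k)$ order without cross-contaminating types. The $V=k$ dimension balance is tight, which makes this bookkeeping delicate. The explicit cycle-integral evaluation to the $B_m^2$ form is a secondary, purely computational hurdle.
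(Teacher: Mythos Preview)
Your approach is genuinely different from the paper's, and there is a real gap.

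The paper does not expand $w^k$ from the cyclic definition. It first proves that $w(\pi)$ is equidistributed with the \emph{bi-alternating inversion number} $\HHiota(\sigma) = \sum_{y<x}(-1)^{x+y}\,\text{sign}(\sigma(x)-\sigma(y))$. Expanding $\HHiota^{\,k}$ attaches a parity sign $(-1)^{x_i+y_i}$ to every factor, and the entire order reduction from $n^{2k}$ to $n^k$ is then carried out by Fourier analysis on the parity cube $\{\pm1\}^v$: monotone $v$-tuples in $\{0,\dots,2n\}$ with prescribed parity vector $\varepsilon$ are counted as $\binom{n+z(\varepsilon)}{v}$, and orthogonality of the characters $\chi_I$ kills every term except those coming from graphs of maximum degree~$2$ whose odd-degree set is a union of consecutive pairs. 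The survivors are then organized as ``breakings'' of $2$-regular graphs $G_\sigma$, $\sigma\in S_k$, and the per-cycle constant is extracted via generating-function identities for $\tanh$ and $\coth$.

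Your cumulant/clustering heuristic does not replace this mechanism. Take $k=2$: a connected type in which the two arcs share exactly one endpoint is a path on $V=3$ vertices, with $e(T)=\pm\tfrac13\neq 0$ and $N_n(T)=\Theta(n^3)$. Passing to cumulants says nothing here, since the path is already connected and the second cumulant equals the second moment. What actually happens is that the four path configurations (tail--tail, head--head, head--tail, tail--head) have $e(T)$ of alternating sign and $N_n(T)$ agreeing to leading order, so their \emph{sum} drops from $\Theta(n^3)$ to $\Theta(n^2)$. In general, connected tree-types on $k$ arcs have $V=k+1$ vertices and each individually contributes $\Theta(n^{k+1})$; their suppression to $O(n^k)$ is a collective cancellation among orientation/attachment patterns, not a dimension count on a single type. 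Your sentence ``dimension balance then forces $V=k$'' is therefore not justified as written. The bi-alternating trick is precisely the device that localizes this cancellation term by term; your Fourier characters on $\Z_{2n+1}$ only handle the cycle-closure constraint in the final evaluation of $\lambda_m$, not this prior reduction. If you want to keep the direct expansion, you need a separate argument---an explicit sign-reversing pairing on non-cycle types, or a second layer of cancellation across types with the same coincidence pattern---before any moment--cumulant bookkeeping can isolate the cycle contribution.
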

These limits are proved in Section~\ref{section-moments}, where we also give a formula for lower order terms. Our methods extend previous work on invariants of random knots~\cite{petaluma}, and work by Mingo and Nica~\cite{mingo1998distribution}.

Define the \emph{normalized writhe} of a random permutation $\pi \in S_{2n+1}$, as $W_{2n+1} = w(\pi)/n$. Theorem~\ref{moments} yields the limits of the moments of the normalized writhe, e.g.
$$ E\left[W_{2n+1}^2\right] \;\xrightarrow[]{\;n \to \infty\;}\; \frac{2}{3} 
\;\;\;\;\;\;\;\;\;\; \;\;\;\;\;\;\;\;\;\; 
E\left[W_{2n+1}^4\right] \;\xrightarrow[]{\;n \to \infty\;}\; \frac{76}{45} $$
We use the method of moments to derive from Theorem~\ref{moments} convergence in distribution of the normalized writhe.
\begin{cor}\label{limdist}
There exists a continuous distribution $W$ on $\R$ such that 
$$ W_{2n+1} \;\;\xrightarrow[\;n \to \infty\;]{D}\;\; W \;,$$
and $W$ is defined by the moment generating function 
$$\mathcal{M}(z) \;=\; E\left[e^{Wz}\right] \;=\; \exp\left(\sum\limits_{m=0}^{\infty}\frac{\lambda_m}{m}z^m\right)\;.$$
where $\lambda_m$ is as in Theorem~\ref{moments}.
\end{cor}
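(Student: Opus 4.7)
My plan is to derive the corollary from Theorem~\ref{moments} via the Fr\'echet--Shohat method of moments. Denote the limiting $k$th moment predicted by Theorem~\ref{moments} by
$$\mu_k\;:=\;\sum_{\sigma\in S_k}\prod_{\gamma\in\text{cycles}(\sigma)}\lambda_{|\gamma|}\,.$$
Since the theorem already gives $E[W_{2n+1}^k]\to\mu_k$ for every $k$, the remaining tasks are: (i)~show that $\{\mu_k\}$ is the moment sequence of a unique probability law; (ii)~identify the MGF of this law as $\mathcal{M}(z)$; and (iii)~show this law is continuous.

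For (i), I would first apply the classical asymptotic $|B_m|\sim 2\,m!/(2\pi)^m$ to the formula for $\lambda_m$, obtaining $\lambda_m=O((4/\pi^2)^m)$; in particular $\lambda_m\le C^m$ for some constant $C$. The crude bound $\mu_k\le k!\,C^k$ (at most $k!$ permutations, each contributing at most $C^{|\gamma_1|+\dots+|\gamma_c|}=C^k$) then verifies Carleman's condition $\sum_k\mu_{2k}^{-1/(2k)}=\infty$, guaranteeing moment-determinacy; equivalently, the series $\Lambda(z):=\sum_m\lambda_m z^m/m$ converges on a disc about the origin, so $\mathcal{M}(z)=e^{\Lambda(z)}$ is a legitimate candidate MGF, analytic near $0$. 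For (ii), I would invoke the exponential (moment-cumulant) formula: the coefficient of $z^k/k!$ in $\exp(\sum_m\lambda_m z^m/m)$ is the sum over set partitions $\Pi$ of $\{1,\dots,k\}$ of $\prod_{B\in\Pi}(|B|-1)!\,\lambda_{|B|}$. Since the number of permutations of $\{1,\dots,k\}$ whose cycle supports realize a prescribed $\Pi$ is exactly $\prod_{B\in\Pi}(|B|-1)!$, this reorganizes as $\sum_{\sigma\in S_k}\prod_\gamma\lambda_{|\gamma|}=\mu_k$. Combining (i) with (ii), Fr\'echet--Shohat yields $W_{2n+1}\xrightarrow{D}W$, where $W$ has MGF $\mathcal{M}$.

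The trickiest item is (iii), since analyticity of $\mathcal{M}$ near $0$ does not in itself rule out atoms. I plan to address it by exhibiting $W$ explicitly as an almost surely convergent series of i.i.d.\ continuous random variables. Using the identity $\zeta(2j)=(-1)^{j+1}(2\pi)^{2j}B_{2j}/(2(2j)!)$ together with $(2^{2j}-1)\zeta(2j)=2^{2j}\sum_{k\text{ odd}}k^{-2j}$, a direct substitution reduces $\lambda_{2j}/(2j)$ to $\sum_{k\text{ odd}}\zeta(2j)(4/(\pi^2 k))^{2j}/j$. Recognizing $\sum_{j\ge1}\zeta(2j)t^{2j}/j=\log(\pi t/\sin\pi t)$ as the cumulant generating function of the standard logistic law, the factorization $\mathcal{M}(z)=e^{\Lambda(z)}$ identifies $\mathcal{M}$ as the MGF of $W=\tfrac{4}{\pi^2}\sum_{k\text{ odd}}L_k/k$, where the $L_k$ are i.i.d.\ standard logistic. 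The series converges almost surely since $\sum 1/k^2<\infty$, and any sum of independent continuous laws is itself continuous, which completes the proof.
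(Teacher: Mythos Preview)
Your argument is correct and follows the same overall strategy as the paper: derive the exponential formula $\mathcal{M}(z)=\exp\bigl(\sum_m \lambda_m z^m/m\bigr)$ from Theorem~\ref{moments}, use the Bernoulli-number asymptotics to show the series has positive radius of convergence (hence moment-determinacy, hence convergence in distribution by the method of moments), and then establish continuity by exhibiting $W$ as an a.s.\ convergent series of i.i.d.\ continuous variables.

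The one point of genuine difference is the series representation you use for step~(iii). The paper substitutes one factor of $B_m$ via $\zeta(m)$ and recognizes the $\log\cosh$ expansion, obtaining
\[
W \;\sim\; \frac{4}{\pi^2}\sum_{n\ge 1}\frac{A_n}{n},\qquad A_n\ \text{i.i.d.\ hyperbolic secant},
\]
summing over \emph{all} positive integers. You instead absorb the factor $(2^m-1)$ into the odd-integer Dirichlet series and recognize the logistic cumulant generating function $\log(\pi t/\sin\pi t)=\sum_j\zeta(2j)t^{2j}/j$, obtaining
\[
W \;\sim\; \frac{4}{\pi^2}\sum_{k\ \text{odd}}\frac{L_k}{k},\qquad L_k\ \text{i.i.d.\ standard logistic}.
\]
These are two different ways of grouping the same infinite product: via the Weierstrass factorizations of $\cosh$ and $\sinh$, both expressions have characteristic function $\prod_{n\ge1}\prod_{m\ \text{odd}}\bigl(1+16t^2/(\pi^4 n^2 m^2)\bigr)^{-1}$. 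Your decomposition is a pleasant alternative to the paper's (and sits ``in between'' the paper's sech series and its finer Laplace double series $\sum_{n}\sum_{m\ \text{odd}} L_{mn}/(mn)$); either one suffices for the continuity conclusion.
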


See Figure~\ref{writhe-hist} for the probability density function of $W$. In Section~\ref{section-limit} we prove Corollary~\ref{limdist}, and investigate some properties of $W$. It turns out that the tails of the distribution of the normalized writhe decay exponentially, at the asymptotic rate given below.
$$ \lim_{t \to \infty} \frac{\log P\left[|W|>t\right]}{t} \;=\; -\frac{\pi^2}{4} \;\approx\; -2.47 \;. $$ 

We present some alternative characterizations of the limit distribution. The infinite series in Corollary~\ref{limdist} may be traded for a definite complex integral, which is stated in Section~\ref{section-limit} as well. We also represent $W$ as an infinite sum. Let $A$ be a random variable with density $f_A(x) = \tfrac{1}{\pi} \,\text{sech}\, x$, and let $A_n \sim A$ iid. Then the infinite sum $\frac{4}{\pi^2} \sum_{n=1}^\infty \frac{1}{n} A_n$ converges almost surely to a random variable having the same distribution as~$W$. Refinements of this series yield further representations of $W$, using Laplace random variables, or products of Gaussians.

We note that none of these properties provides an efficient way to sample from $W$ or compute its percentiles. It is still desirable to find a closed form formula for that purpose. 

\begin{figure}[h]
\begin{center}
\includegraphics[width=\textwidth]{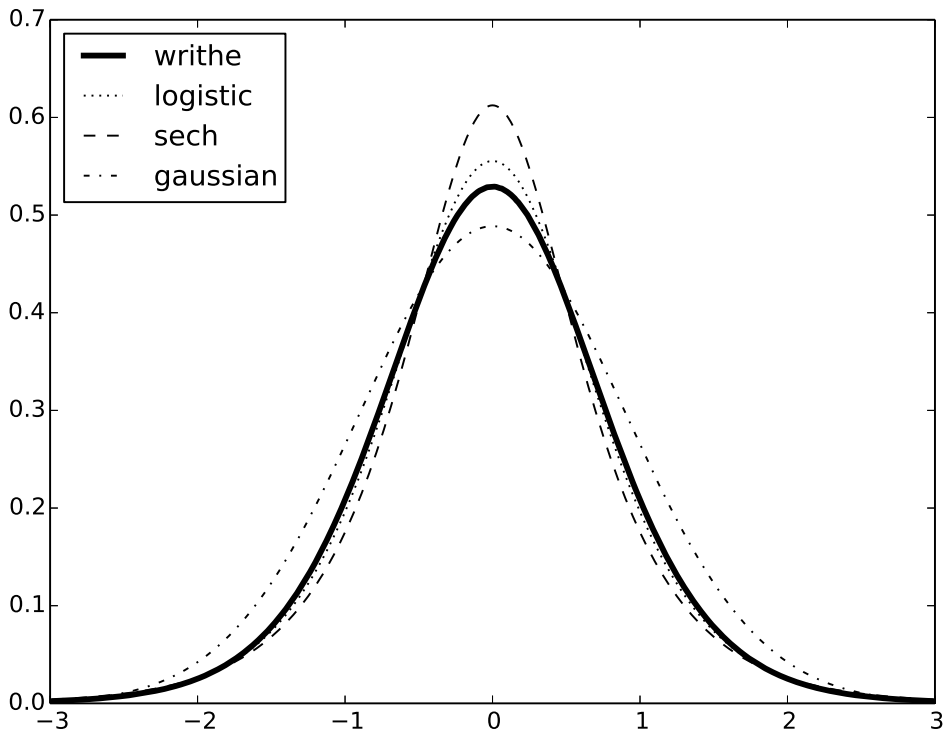}
\end{center}
\caption{The writhe distribution with friends. The probability density function of $W$, the limit distribution of the normalized writhe is given by the solid line. The Gaussian, Logistic, and Sech distributions with variance $2/3$ are given for comparison.}
\label{writhe-hist}
\end{figure}

\subsection*{Acknowledgment}\label{ack}

I wish to thank Joel Hass, Nati Linial, and Tahl Nowik for many hours of insightful discussions on random knots in general, and on the subject of this work. I also wish to thank Yuval Peres for a helpful discussion, and the anonymous referee for useful comments and suggestions. This project was supported by BSF grant 2012188. 

\section{Random Framed Knots}\label{section-knots}

We begin with describing the relation to random framed knots. More background on framed knots can be found, e.g., in~\cite{chmutov2012introduction}, and for further information on the random model, see~\cite{adams2015knot,petaluma}. Readers interested mainly in the combinatorial aspect may skip this section.

\subsection{Framed Knots}

A \emph{knot} is a simple closed curve in $3$ dimensions, or more precisely, a smooth embedding $S^1 \hookrightarrow \R^3$ up to the following equivalence. Two such curves represent the same knot iff one can be deformed into the other through an isotopy of the ambient space. 

\begin{figure}[htb]
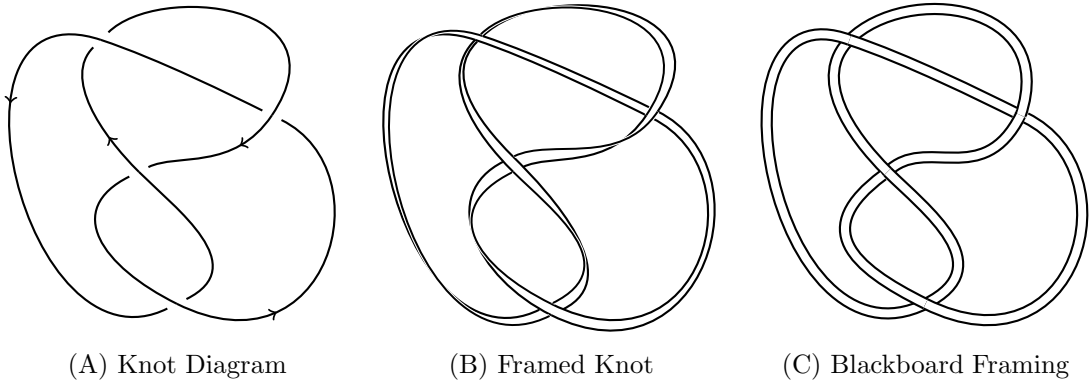

\begin{center}
\begin{tabular}{ccc}
\tikz[thick,decoration={markings,mark=at position 0.3 with {\arrow{>}}}]{
\draw[shorten <=4pt] (-0.5,0) .. controls +(-1,-0.5) and +(-1,0.5) .. (0,-1.75);
\draw[shorten >=4pt,postaction={decorate}] (0,-1.75) .. controls +(2,-1) and +(1.5,-0.75) .. (1.25,0.75);
\draw[shorten <=4pt] (1.25,0.75) .. controls +(-0.5,0.25) and +(0.75,-0.25) .. (-1,1.75);
\draw[shorten >=4pt,postaction={decorate}] (-1,1.75) .. controls +(-2.25,0.75) and +(-2,-1) .. (0,-1.75);
\draw[shorten <=4pt] (0,-1.75) .. controls +(1,0.5) and +(0.5,-0.5) .. (-0.5,0);
\draw[shorten >=4pt,postaction={decorate}] (-0.5,0) .. controls +(-0.5,0.5) and +(-0.5,-0.5) .. (-1,1.75);
\draw[shorten <=4pt] (-1,1.75)  .. controls +(0.75,0.75) and +(1,1.5) .. (1.25,0.75);
\draw[shorten >=4pt,postaction={decorate}] (1.25,0.75) .. controls +(-0.5,-0.75) and +(0.5,0.25) .. (-0.5,0);
\pgfresetboundingbox \clip (-2.25,-2.25) rectangle (2.25,2.25);} &
\tikz[thick,decoration={markings,mark=at position 0.3 with {\arrow{>}}}]{
\draw[shorten <=4pt] (-0.5,0.2) .. controls +(-1,-0.5) and +(-1,0.5) .. (0,-1.9);
\draw[shorten <=0.4pt,double distance=0.75pt,white,double=black,thin] (-0.5,0) .. controls +(-1,-0.5) and +(-1,0.5) .. (0,-1.75);
\draw[shorten >=0pt] (0,-1.9) .. controls +(2,-1) and +(1.5,-0.75) .. (1.4,0.8);
\draw[shorten >=2.5pt] (0,-1.75) .. controls +(2,-1) and +(1.5,-0.75) .. (1.25,0.75);
\draw[shorten <=2.5pt] (1.4,0.8) .. controls +(-0.5,0.25) and +(0.75,-0.25) .. (-0.9,1.8);
\draw[shorten <=0pt] (1.25,0.75) .. controls +(-0.5,0.25) and +(0.75,-0.25) .. (-1,1.75);
\draw[shorten >=5pt] (-1,1.75) .. controls +(-2.25,0.75) and +(-2,-1) .. (0,-1.75);
\draw[shorten >=0.7pt,double distance=0.75pt,white,double=black,thin] (-0.9,1.8) .. controls +(-2.25,0.75) and +(-2,-1) .. (0,-1.9);
\draw[shorten <=5pt] (0,-1.9) .. controls +(1,0.5) and +(0.5,-0.5) .. (-0.5,0.2);
\draw[shorten <=0.7pt,double distance=0.75pt,white,double=black,thin] (0,-1.75) .. controls +(1,0.5) and +(0.5,-0.5) .. (-0.5,0);
\draw[shorten >=3pt] (-0.5,0.2) .. controls +(-0.5,0.5) and +(-0.5,-0.5) .. (-0.9,1.8);
\draw[shorten >=0pt] (-0.5,0) .. controls +(-0.5,0.5) and +(-0.5,-0.5) .. (-1,1.75);
\draw[shorten <=0pt] (-0.9,1.8)  .. controls +(0.75,0.75) and +(1,1.5) .. (1.4,0.8);
\draw[shorten <=3pt,double distance=0.75pt,white,double=black,thin] (-1,1.75)  .. controls +(0.75,0.75) and +(1,1.5) .. (1.25,0.75);
\draw[shorten >=4pt] (1.25,0.75) .. controls +(-0.5,-0.75) and +(0.5,0.25) .. (-0.5,0);
\draw[shorten >=0.4pt,double distance=0.75pt,white,double=black,thin] (1.4,0.8) .. controls +(-0.5,-0.75) and +(0.5,0.25) .. (-0.5,0.2);
\pgfresetboundingbox \clip (-2.25,-2.25) rectangle (2.25,2.25);} &
\tikz[thick,decoration={markings,mark=at position 0.3 with {\arrow{>}}}]{
\draw[double distance=3pt,shorten <=0pt] (-1,1.75)  .. controls +(0.75,0.75) and +(0.5,1.5) .. (1.25,0.75);
\draw[double distance=3pt,shorten >=0pt] (1.25,0.75) .. controls +(-0.5,-1) and +(0.5,0.5) .. (-0.5,0);
\draw[double distance=3pt,shorten >=0pt] (-0.5,0) .. controls +(-0.5,0.5) and +(-0.5,-0.5) .. (-1,1.75);
\draw[double distance=3pt,shorten >=0pt] (-1,1.75) .. controls +(-2,0.75) and +(-2,-1) .. (0,-1.75);
\draw[double distance=3pt,shorten <=0pt] (0,-1.75) .. controls +(1,0.5) and +(0.5,-0.5) .. (-0.5,0);
\draw[double distance=3pt,shorten <=2.2pt] (-0.5,0) .. controls +(-1,-0.75) and +(-1,0.5) .. (0,-1.75);
\draw[double distance=3pt,shorten >=2.2pt] (0,-1.75) .. controls +(2,-1) and +(1.5,-0.75) .. (1.25,0.75);
\draw[double distance=3pt,shorten <=2.4pt] (1.25,0.75) .. controls +(-0.5,0.25) and +(0.75,-0.25) .. (-1,1.75);
\pgfresetboundingbox \clip (-2.25,-2.25) rectangle (2.25,2.25);} \\
(A) Knot Diagram & (B) Framed Knot & (C) Blackboard Framing
\end{tabular}
\end{center}
\caption{The figure eight knot, with two framings.}
\label{fig-knots}
\end{figure}

A knot can be represented by a \emph{knot diagram}, which is its projection to the plane, with a finite number of double points, called \emph{crossings}, marked to distinguish the over-strand from the under-strand. See Figure~\ref{fig-knots}A for a diagram of the \emph{figure-eight} knot.

A \emph{framed knot} is equipped with a normal vector, defined continuously at each point of the curve. By pushing the curve a bit along the framing vectors, one obtains an embedding of an orientable closed ribbon $[0,\varepsilon] \times S^1$, as illustrated in Figure~\ref{fig-knots}B. 

A natural way to choose a framing given a knot diagram, is parallel to the plane of the projection. This choice is called the \emph{blackboard framing}, since it can be obtained by drawing two parallel copies of the knot diagram, as in Figure~\ref{fig-knots}C.

\def\positive{\tikz[thick,->,baseline=-2]{\draw[black](0.3,-0.13) -- (0,0.17);\draw[white,-,line width=3](0.03,-0.13) -- (0.33,0.17);\draw[black](0.03,-0.13) -- (0.33,0.17);}}
\def\negative{\tikz[thick,->,baseline=-2]{\draw[black](0.03,-0.13) -- (0.33,0.17);\draw[white,-,line width=3](0.3,-0.13) -- (0,0.17);\draw[black](0.3,-0.13) -- (0,0.17);}}

\subsection{Writhe and Framing}

There are two types of crossing points in knot diagrams: positive~\positive, and negative~\negative. We first define the \emph{writhe of a knot diagram} $D$, as a signed sum over crossings in the diagram. Namely, 
$$w(D) = \#\positive-\#\negative$$
where the count is over all crossings in $D$. For example, the writhe of the diagram in Figure~\ref{fig-knots}A is $0$, as the two points in the lower part of the diagram are positive, and the other two are negative. The writhe cannot be pulled back to an isotopy invariant of knots, since it is not preserved under the following local change of diagrams, 
$ 
\tikz[thick,baseline=-2,scale=0.5]{
\draw[-,black](-0.5,-0.1) .. controls +(0:0.2) and +(180:0.2) .. (0,0);
\draw[-,black](0,0) .. controls +(0:0.2) and +(180:0.2) .. (0.5,-0.1);}
\Leftrightarrow 
\tikz[thick,baseline=-2,scale=0.5]{
\draw[-,black](-0.5,-0.1) .. controls +(0:0.2) and +(225:0.2) .. (0,0);
\draw[-,black](0,0) .. controls +(45:0.2) and +(0:0.2) .. (0,0.4);
\draw[-,white,line width=2](0,0.4) .. controls +(180:0.2) and +(135:0.2) .. (0,0);
\draw[-,white,line width=2](0,0) .. controls +(-45:0.2) and +(0:0.2) .. (0.5,-0.1);
\draw[-,black](0,0.4) .. controls +(180:0.2) and +(135:0.2) .. (0,0);
\draw[-,black](0,0) .. controls +(-45:0.2) and +(180:0.2) .. (0.5,-0.1);}
$, known as the Reidemeister move of type~I. A well-known use of the writhe appears in the definition of the Jones Polynomial through the Kauffman Brackets~\cite{kauffman1987knots}.

A \emph{link} is a collection of knots, or more precisely, a smooth embedding of a disjoint union of copies of $S^1$ into $\R^3$, as usual considered up to isotopies. Let $L$ be a two-component link, and let $D$ be an arbitrary link diagram of $L$. The \emph{linking number} is a well-known link invariant, firstly defined by Gauss~\cite{gauss1841general}. Here we use the definition
$$ lk(L) = \tfrac12\left(\#\positive-\#\negative\right) $$
where the count is only over crossings in $D$ that involve one strand from each component. One can show that the linking number doesn't depend on the choice of the diagram $D$. Hence it is well-defined as a link invariant. For example, 
$lk(\;
\tikz[thick,baseline=-3,scale=0.5,decoration={markings,mark=at position 0.05 with {\arrow{<}}}]{
\draw[-,gray](0.6,0) .. controls +(-90:0.4) and +(-90:0.4) .. (0,0);
\draw[-,white,line width=2](1,0) .. controls +(-90:0.4) and +(-90:0.4) .. (0.4,0);
\draw[-,black,postaction={decorate}](1,0) .. controls +(-90:0.4) and +(-90:0.4) .. (0.4,0);
\draw[-,black](0.4,0) .. controls +(90:0.4) and +(90:0.4) .. (1,0);
\draw[-,white,line width=2](0,0) .. controls +(90:0.4) and +(90:0.4) .. (0.6,0);
\draw[-,gray,postaction={decorate}](0,0) .. controls +(90:0.4) and +(90:0.4) .. (0.6,0);}
\;)=1$, as both crossings are positive and involve the two components.

The \emph{framing number} of a framed knot, also known as its \emph{self-linking}, is defined as the linking number of the knot, with the parallel curve obtained by small pushing in the direction of the framing vector, cf. Figure~\ref{fig-knots}B. The framing number is $0$ exactly when the ribbon described above can be extended to an embedded oriented surface whose boundary is the original knot. Since it is preserved under continuous deformations, the framing number is an invariant of framed knots. In fact, a framed knot is characterized by its framing number and the underlying knot type.

By the above definition, the framing number of the blackboard framing of a given knot diagram is equal to the writhe of that diagram, and hence can be computed as a sum of crossing signs. Indeed, each crossing in the knot diagram corresponds to four crossings in the link obtained from its blackboard framing. Two of these crossings involve the different components, the sign of both is equal to that of the original crossing. This is balanced by the coefficient $\tfrac12$ in the formula for the linking number.

The reader should be warned, however, that these two notions do not necessarily coincide for projections of framed knots that are not given by blackboard framing. For example, while both in Figures~\ref{fig-knots}B and~\ref{fig-knots}C the writhe of the diagram is $0$, the framing numbers of the knots are different, specifically $1$ and $0$ respectively. In general, the difference between the framing number of a framed knot and the writhe of its projection counts, in some appropriate sense, how many times the framing vector twists around the projected knot~\cite{adams1994knot}.

We mention two basic properties of the writhe that follow easily from the definition. Changing the orientation of the diagram preserves its writhe, while passing to a mirror image or flipping all crossings negates it.

\newcommand{\AD}[1]
{\begin{tikzpicture}[baseline=-3pt,>=stealth]
\draw[thick] (0,0) circle[radius=0.25]; \filldraw (0,0.25) circle[radius=0.05];
\foreach \start/\end/\sign/\cosign in {#1} \draw[->] (\start:0.25) -- node[pos=-0.2,scale=0.6]{$\sign$} node[pos=1.2,scale=0.6]{$\cosign$} (\end:0.25);
\clip (-0.4,-0.4) rectangle (0.4,0.4);
\end{tikzpicture}}

\begin{rmks*} See~\cite{chmutov2012introduction} or~\cite{petaluma} for definitions and more details on the following notions.
\begin{enumerate}
\item
A \emph{Gauss diagram formula} for the writhe is given by
$$ w(D) \;\;=\;\; \left\langle \AD{210/30//} +\AD{330/150//}\;,\;D\right\rangle \;\;=\;\; \left\langle \AD{210/30/+/} - \AD{210/30/-/} + \AD{330/150/+/} - \AD{330/150/-/}\;,\;D\right\rangle $$ 
\item 
The framing number is a \emph{first order} \emph{finite type invatiant} of framed knots. 
\end{enumerate}
\end{rmks*}

\subsection{The Random Model}

A \emph{petal diagram} is a closed planar curve, comprised of 2n + 1 straight segments crossing at a single point, and arcs connecting consecutive pairs of segment tips. See Figure~\ref{fig-petals}A for a petal diagram with $7$ petals. Adams et al. showed that this representation is universal, i.e., every knot can be positioned so that it has a petal projection~\cite{adams2015knot}. 

\begin{figure}[htb]
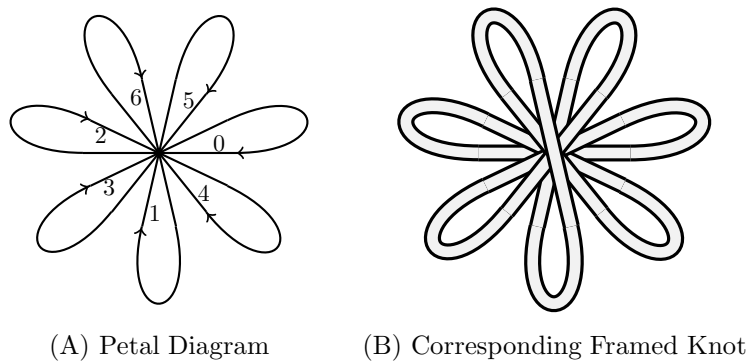

\begin{center}
\begin{tabular}{cc}
\tikz[thick,decoration={markings,mark=at position 0.05 with {\arrow{<}}}]{
\foreach \angle in {0, 51.4, ..., 320} 
\draw[postaction={decorate}] (\angle:1) .. controls +(\angle:1.4) and +(\angle+25.7:1.4) .. (\angle+25.7:1);
\foreach \angle/\num in {0/0,  51.4/4, 102.8/1, 154.2/3, 205.6/2, 257.0/6, 308.4/5} {
\node[scale=0.9] at (-\angle+9:0.8) {$\num$};
\draw (-\angle:1) -- (-\angle:-1);}
\pgfresetboundingbox \clip (-2.25,-2.25) rectangle (2.25,2.25);} &
\tikz[thick,decoration={markings,mark=at position 0.05 with {\arrow{<}}}]{
\foreach \angle in {0, 51.4, ..., 320} 
\draw[very thick,double distance=4pt,double=black!5] (\angle:1) .. controls +(\angle:1.4) and +(\angle+25.7:1.4) .. (\angle+25.7:1);
\foreach \angle/\num in {0/0,  102.8/1, 205.6/2, 154.2/3, 51.4/4, 308.4/5, 257.0/6} {
\draw[very thick,double distance=4pt,double=black!5] (-\angle:1) -- (-\angle:-1);}
\pgfresetboundingbox \clip (-2.25,-2.25) rectangle (2.25,2.25);} \\
(A) Petal Diagram & (B) Corresponding Framed Knot
\end{tabular}
\end{center}
\caption{A diagram with $7$ petals, and its blackboard framing. }
\label{fig-petals}
\end{figure}

Consider a petal diagram with $2n+1$ petals. The only variable is the relative ordering of the heights of the arcs in the original knot, as they pass straightly above the center point. This ordering is encoded by a permutation $\sigma \in S_{2n+1}$, that assigns a height to each arc, as marked in Figure~\ref{fig-petals}A. In the \emph{Petaluma} model, we generate a random knot, simply by picking $\sigma$ uniformly at random~\cite{petaluma}.

There is a natural extension of this random model to framed knots. We assign the blackboard framing to a petal diagram, with random $\sigma \in S_{2n+1}$ as in the Petaluma model, to obtain a random framed knot. This model is universal for framed knots --  see Proposition~\ref{universal} below. See also Figure~\ref{fig-petals}B for an illustration of a random framed knot. In the rest of this section, we will draw knot diagrams without the double lines that indicate the use of the blackboard framing.

What is the writhe of a petal knot diagram? Note that petal diagrams are not a special case of regular knot diagrams, because here the center point has more than two pre-images. However, the arcs above the center can be slightly perturbed without changing the knot type so as to obtain a decent knot diagram with $\tbinom{2n+1}{2}$ crossings. These crossings correspond to all pairs of the $2n+1$ arcs, and the writhe is the sum of their signs.

These signs depend on the arcs' directions and heights. Without loss of generality, number the arcs clockwise according to their directions in the plane of the diagram. These directions are equally placed with angles of $2\pi/(2n+1)$ in between. Given a pair of arcs $i,j$, such that $(j-i) \bmod(2n+1) \in \{1,\dots,n\}$, the sign of their crossing is positive if $\pi(i) < \pi(j)$ and negative otherwise. Therefore, the writhe of the petal diagram defined by $\pi$ coincides with the writhe of the permutation $\pi$, as defined in the introduction to this paper.

\subsection{Variations}

We proceed with a few simple manipulations to petal diagrams, that will come useful later.

\begin{figure}[htb]
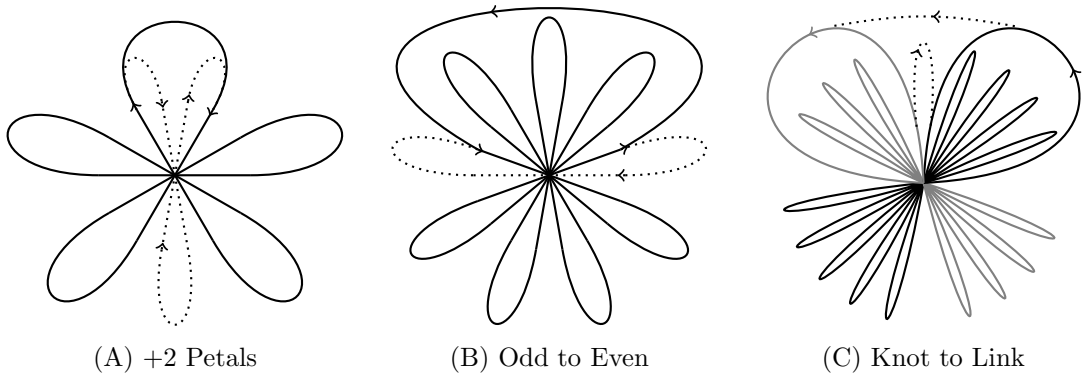

\begin{center}
\begin{tabular}{ccc}
\tikz[thick,scale=1]{
\foreach \angle/\shmangle in {0/30,60/120,150/180,210/240,300/330} {
\draw (\angle:1) .. controls +(\angle:1.8) and +(\shmangle:1.8) .. (\shmangle:1); \draw (\angle:1) -- (\angle:-1);}
\draw[postaction={decorate},dotted,decoration={markings,mark=at position 0 with {\arrow{<}}}] (80:1) -- (80:-1); 
\draw[postaction={decorate},dotted,decoration={markings,mark=at position 1 with {\arrow{<}}}] (100:-1) -- (100:1);
\draw[postaction={decorate},dotted,decoration={markings,mark=at position 0 with {\arrow{<}}}] (60:1) .. controls +(60:0.9) and +(80:0.9) .. (80:1);
\draw[postaction={decorate},dotted,decoration={markings,mark=at position 0.99 with {\arrow{<}}}] (100:1) .. controls +(100:0.9) and +(120:0.9) .. (120:1);
\draw[postaction={decorate},dotted,decoration={markings,mark=at position 0 with {\arrow{<}}}] (260:1) .. controls +(260:1.35) and +(280:1.35) .. (280:1);
\pgfresetboundingbox \clip (-2.25,-2.1) rectangle (2.25,2.4);} &
\tikz[thick,scale=1]{
\foreach \angle in {40, 80, 120, 200, 240, 280, 320} \draw (\angle:1) .. controls +(\angle:1.5) and +(\angle+20:1.5) .. (\angle+20:1);
\draw[postaction={decorate},dotted,decoration={markings,mark=at position 0.995 with {\arrow{<}}}] (0:1) .. controls +(0:1.5) and +(20:1.5) .. (20:1);
\draw[postaction={decorate},dotted,decoration={markings,mark=at position 0.005 with {\arrow{<}}}] (160:1) .. controls +(160:1.5) and +(180:1.5) .. (180:1);
\foreach \angle in {40, 80, ..., 320} \draw (\angle:1) -- (\angle:-1);
\draw[dotted,postaction={decorate},dotted,decoration={markings,mark=at position 0.05 with {\arrow{>}}}] (0:1) -- (0:-1);
\draw[postaction={decorate},decoration={markings,mark=at position 0.6 with {\arrow{>}}}] (20:1) .. controls +(30:5) and +(150:5) .. (160:1);
\pgfresetboundingbox \clip (-2.25,-2.1) rectangle (2.25,2.4);} &
\tikz[thick,decoration={markings,mark=at position 0.45 with {\arrow{>}}}]{
\foreach \angle/\color in {18/black,34/black,50/black,66/black,116/gray,132/gray,148/gray,
188/black,204/black,220/black,236/black,252/black,286/gray,302/gray,318/gray,334/gray} 
\draw[color=\color] (0,0) .. controls +(\angle-2:2.5) and +(\angle+8:2.5) .. (0,0);
\foreach \angle/\color in {3/black,93/gray} \draw[postaction={decorate},color=\color] (0,0) .. controls +(\angle:4.5) and +(\angle+84:4.5) .. (0,0);
\draw[postaction={decorate},color=black,dotted] (97:0.75) .. controls +(96:1.5) and +(84:1.5) .. (83:0.75);
\draw[postaction={decorate},color=black,dotted] (60:2.4) .. controls +(170:1) and +(10:1) .. (120:2.4);
\pgfresetboundingbox \clip (-2.25,-2.0) rectangle (2.25,2.5);} \\
(A) +2 Petals & (B) Odd to Even & (C) Knot to Link
\end{tabular}
\end{center}
\caption{Three manipulations of petal diagrams.}
\label{fig-moves}
\end{figure}

\textbf{(A)} 
We add two petals to a given petal diagram, as in Figure~\ref{fig-moves}A. If the two new arcs are assigned with two consecutive heights, then the unframed knot type is unchanged. Indeed, one can pull the whole supplement back to the original knot. Moreover, since the new arcs have adjacent heights and opposite directions, their crossings with any third arc have opposite signs, with no contribution to the writhe. The only change to the writhe comes from the crossing between the two new arcs, which is $\pm 1$, depending on the choice of heights.

By iterating this move, we can set the writhe at will. As a framed knot is determined by its unframed knot type and framing number, we conclude that the Petaluma model is universal for framed knot, in the following sense.
\begin{prop}\label{universal}
For every framed knot $K$ there exists a number $N \in \N$, such that for every $n \in \{N,N+2,N+4,\dots\}$, $K$ can be realized by a petal diagram with $2n+1$ petals.
\end{prop}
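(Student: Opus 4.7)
The strategy is to combine the universality of petal diagrams for unframed knots, due to Adams et al., with the writhe-changing move~(A) described in the preceding paragraph. Since a framed knot is characterized by its underlying knot type together with its framing number, it suffices to show that for each pair $(K_0,f)$ consisting of an unframed knot type $K_0$ and an integer framing $f$, we can realize $K_0$ by a petal diagram whose blackboard framing has writhe exactly $f$, and that we can do so with $2n+1$ petals for all sufficiently large $n$ of a fixed parity.

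First I would invoke the unframed universality result to obtain a petal diagram $D_0$ with $2n_0+1$ petals representing $K_0$, and let $w_0$ denote its writhe. Next I would apply move~(A) a total of $|f-w_0|$ times, each time choosing the relative heights of the two new arcs so that the writhe moves by the correct sign toward $f$. Each such application preserves the underlying knot type and adds two petals, so we end with a petal diagram of $2N+1$ petals for $N = n_0 + |f-w_0|$, whose blackboard framing realizes the given framed knot $K$.

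To handle all $n \in \{N, N+2, N+4, \dots\}$ simultaneously, I would apply move~(A) in cancelling pairs: in each pair perform one application with positive sign and one with negative sign, adding four petals and leaving the writhe unchanged. Repeating this $(n-N)/2$ times yields a petal diagram with $2n+1$ petals representing the same framed knot $K$, for every $n$ with the same parity as $N$ and $n \geq N$.

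The main subtlety is the parity constraint: move~(A) always adds exactly two petals per unit change of writhe, so one unit of $n$ is consumed for each $\pm 1$ in writhe. This forces the set of attainable petal counts to be an arithmetic progression with common difference $2$ rather than all integers $\geq N$, which is exactly what the proposition claims. Apart from this, I only need to note that the writhe of a petal diagram computed via the central-point perturbation agrees before and after move~(A) on all surviving crossings, because move~(A) inserts its two new arcs adjacent in direction and with opposite orientations, so their two crossings with every third arc cancel in sign and the only net contribution comes from the single new mutual crossing, exactly as claimed in the text preceding the proposition.
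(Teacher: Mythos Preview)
Your proposal is correct and follows essentially the same approach as the paper: start from the Adams et al.\ petal diagram for the underlying knot, then iterate move~(A) to adjust the writhe to the desired framing, and finally pad with cancelling pairs of moves to realize all larger $n$ of the same parity. The paper's own argument is the brief paragraph immediately preceding the proposition, and your write-up simply spells out the details (in particular the cancelling-pair step for the arithmetic progression) that the paper leaves implicit in the phrase ``by iterating this move, we can set the writhe at will.''
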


\textbf{(B)}
One can take the highest arc in a $(2n+1)$-petal diagram, and continuously move it outside, as illustrated in Figure~\ref{fig-moves}B for $n=4$. This operation preserves both the knot type and the writhe. Indeed, since the highest arc is moved away, exactly $n$ of the $2n$ eliminated crossings are positive, and $n$ are negative. Hence the new, $2n$-petal diagram in Figure~\ref{fig-moves}B yields the same framed knot. In conclusion, a random permutation $\pi \in S_{2n}$ yields a random framed knot, equivalent in distribution to the one obtained from $(2n+1)$-petal diagrams. 

\textbf{(C)}
Two petal diagrams, with $2l$ and $2r$ petals, can be superpositioned to create a $2$-component link diagram with $2(l+r)$ petals and a single multi-crossing point. See Figure~\ref{fig-moves}C for an example with $10$ and $8$ petals, colored black and gray respectively. A permutation $\pi \in S_{2(l+r)}$ can determine the heights of all arcs and yield a link. This representation is universal for links~\cite{adams2015knot}. Similar to framed knots, by taking $\pi$ uniformly at random and using the blackboard framing, we obtain a random \emph{framed link}. This model extends further to more than two components.

By reconnecting the two large petals of a link diagram, as do the dotted arcs in Figure~\ref{fig-moves}C, one obtains a petal diagram of a knot, without changing any crossing or introducing new ones. Comparing the sum of crossing signs yields the relation 
$$ w(K) = w(K_L) + w(K_R) + 2\cdot lk(K_L,K_R) $$
where $K_L$ and $K_R$ are the two components in the link diagram, and $K$ is the combined knot diagram.

\section{Properties and Connections}\label{section-properties}

In the following sections, we study several features of the writhe, and how they are reflected by different viewpoints, pointing at the relation of the writhe to previously studied notions.

\subsection{Alternating Inversion Numbers}\label{section-properties-1}

We give an equivalent expression for the writhe of a permutation. First, consider the following three statistics of a permutation $\sigma \in S_N$.
\begin{align*}
\iota(\sigma) \;&=\; \sum\limits_{0 \leq y < x < N} \;\text{sign}(\sigma(x)-\sigma(y)) \\
\Hiota(\sigma) \;&=\; \sum\limits_{0 \leq y < x < N} (-1)^{y}\;\text{sign}(\sigma(x)-\sigma(y)) \\
\HHiota(\sigma) \;&=\; \sum\limits_{0 \leq y < x < N} (-1)^{y+x}\;\text{sign}(\sigma(x)-\sigma(y))
\end{align*} 
The first statistic, $\iota(\sigma)$, is the classic and well-studied \emph{inversion number}~(e.g. \cite[page 36]{stanley1999enumerative}), up to the affine translation $\iota(\sigma) = 2\text{inv}(\sigma) - \nobreak\tbinom{N}{2}$. The \emph{alternating inversion number}, $\Hiota(\sigma)$, was defined and studied in~\cite{chebikin2008variations}. Along these lines, we refer to $\HHiota(\sigma)$ as the \emph{bi-alternating inversion number} of $\sigma$.

Let $\sigma_N \in S_N$ be drawn uniformly at random. By grouping the terms according to $y$, one can show that each of $\iota(\sigma_N)$ and $\Hiota(\sigma_N)$ follows the \emph{Mahonian} distribution, namely
$$ \frac{\iota(\sigma_N) + \tbinom{N}{2}}{2} \;=\; \text{inv}(\sigma_N) \;\sim\; U_1 + U_2 + \dots + U_N $$
where each $U_i$ is uniform in $\{0,1,\dots,i-1\}$, and all $\{U_i\}$ are independent. By the central limit theorem, as $N \to \infty$, the normalized inversion number $3\iota(\sigma_N) / N^{3/2}$ converges in distribution to a standard Gaussian~\cite[page 257]{feller1968introduction}. Somewhat surprisingly, Corollary~\ref{limdist}, together with the following lemma, show that the case of $\HHiota(\sigma_N)$ is substantially different.

\begin{lemma}\label{bialternating}
The following three variables follow the same distribution law.
\begin{enumerate}
\item $w(\pi)$ for uniform $\pi \in S_{2n+1}$
\item $\HHiota(\sigma)$ for uniform $\sigma \in S_{2n+1}$
\item $\HHiota(\sigma)$ for uniform $\sigma \in S_{2n}$
\end{enumerate}
\end{lemma}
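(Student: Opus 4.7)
My plan is to prove (1) $=$ (2) pointwise via a natural bijection on $S_{2n+1}$, and then to reduce (3) to (2) using a cyclic symmetry of $\HHiota$ that happens to hold on $S_{2n+1}$.

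For (1) $=$ (2), I would use the doubling map $\alpha : \Z_{2n+1} \to \Z_{2n+1}$, $\alpha(k) = 2k \bmod (2n+1)$, which is a bijection because $\gcd(2, 2n+1) = 1$. Setting $\sigma = \pi \circ \alpha^{-1}$ defines a bijection $S_{2n+1} \to S_{2n+1}$, so uniform $\pi$ and uniform $\sigma$ correspond. The key observation is that $\alpha$ sends $\{0, \dots, n\}$ onto the even residues $\{0, 2, \dots, 2n\}$ and $\{n+1, \dots, 2n\}$ onto the odd residues $\{1, 3, \dots, 2n-1\}$. Matching the contribution of each unordered pair $\{u, v\}$ on both sides, one finds: if $u, v$ lie on the same side of $n$, the pair is automatically ``short'' in $\Z_{2n+1}$, the images have equal parity so $(-1)^{\alpha(u) + \alpha(v)} = +1$, and $\alpha$ preserves ordering, yielding $+\text{sign}(\pi(v) - \pi(u))$ on both sides; if $u, v$ straddle $n$, then $(-1)^{\alpha(u) + \alpha(v)} = -1$, and an elementary calculation shows $\alpha(u) < \alpha(v)$ holds exactly when $\{u, v\}$ is ``long'', so the two sign flips cancel correctly. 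This gives $w(\pi) = \HHiota(\sigma)$ pointwise.

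For (2) $=$ (3), I would first prove that $\HHiota$ is invariant under the cyclic shift $\rho(i) = i + 1 \bmod (2n+1)$ on $S_{2n+1}$. Computing $\HHiota(\sigma \circ \rho)$ directly: isolate the ``wrapped'' terms where $x = 2n$ is sent to $0$, and reindex the rest by $(y', x') = (y+1, x+1)$. The non-wrapped part matches $\HHiota(\sigma)$ minus the pairs involving position $0$, while the wrapped part contributes exactly those pairs; the parity arithmetic uses $(-1)^{2n + y} = (-1)^y$, valid since $2n$ is even. Given this invariance, for each $\sigma \in S_{2n+1}$ choose the unique $k$ making $\sigma^* = \sigma \circ \rho^k$ satisfy $\sigma^*(2n) = 2n$, so $\HHiota(\sigma) = \HHiota(\sigma^*)$. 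Since $\sigma^*(2n) = 2n$ is the maximum value, the pairs $(y, 2n)$ contribute $\sum_{y=0}^{2n-1}(-1)^{2n+y} = 0$, and hence $\HHiota(\sigma^*) = \HHiota(\sigma')$ where $\sigma' := \sigma^*|_{\{0, \dots, 2n-1\}} \in S_{2n}$. The map $\sigma \mapsto \sigma'$ is $(2n+1)$-to-$1$ since each right-rotation orbit collapses to one element, so uniform $\sigma \in S_{2n+1}$ pushes forward to uniform $\sigma' \in S_{2n}$, completing the distributional equality.

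The conceptual crux is the cyclic invariance of $\HHiota$ on $S_{2n+1}$. The statistic is defined on a linearly ordered set with alternating signs and exhibits no a priori rotation symmetry, yet the invariance holds precisely because the odd cardinality $2n+1$ makes $(-1)^{2n} = 1$. Once that key identity is in place, both (1) $=$ (2) (via the doubling map) and (2) $=$ (3) (via rotating the max to the last position and restricting) reduce to essentially routine case analysis.
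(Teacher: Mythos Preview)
Your proposal is correct and follows essentially the same approach as the paper: the doubling bijection $\alpha(k)=2k$ on $\Z_{2n+1}$ for (1)$=$(2), and rotation invariance plus deletion of the top value for (2)$=$(3). The one minor difference is that the paper derives the rotation invariance of $\HHiota$ on $S_{2n+1}$ indirectly, by transporting the already-known rotation invariance of $w$ through the bijection ($\HHiota(\sigma\circ\rho)=w(\sigma\circ\rho\circ\tau)=w(\sigma\circ\tau)=\HHiota(\sigma)$), whereas you verify it directly by reindexing the sum; both are short and equivalent.
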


\begin{proof}
Let $\sigma \in S_{2n+1}$, and define $\tau \in S_{2n+1}$ by $\tau(x) = 2x \mod (2n+1)$. In order to establish $w(\sigma \circ \tau) = \HHiota(\sigma)$, we compare the following sums.
$$ \sum\limits_{i=0}^{2n} \sum\limits_{j=1}^{n} \text{sign} \left(\sigma(2(i+j))-\sigma(2i)\right) \;=\; \sum\limits_{x=1}^{2n} \sum\limits_{y=0}^{x-1} (-1)^{y+x} \;\text{sign}(\sigma(x)-\sigma(y))$$
First consider $x$ and $y$ of equal parity. These terms correspond to $j \equiv (x-y)/2$ and $i \equiv y/2$, where all computations are in $\Z_{2n+1}$. Note that $j\in \{1,\dots,n\}$ as required. Otherwise, $x$ and $y$ have different parities, and we have $(-1)$ in the term on the right. In this case, the corresponding term on the left is given by $j \equiv (y-x)/2$ and $i \equiv x/2$, and the change of sign cancels with the interchange of the minuend and the subtrahend. This proves the equivalence of (1) and (2).

Let $\rho(x) = x + 1\mod (2n+1)$. By the symmetric definition of the writhe, it is invariant to rotations, i.e., $w(\pi \circ \rho) = w(\pi)$. Hence, so is the bi-alternating inversion number, $\HHiota(\sigma \circ \rho) = w(\sigma \circ \rho \circ \tau) = w\left(\sigma \circ \rho \circ \tau \circ \rho^n\right) = w(\sigma \circ \tau) = \HHiota(\sigma)$.

Let $\sigma \in S_{2n+1}$ be drawn uniformly. By rotation invariance, we can assume $\sigma(2n)=2n$, without modifying the distribution of $\HHiota(\sigma)$. Consider the terms with $x=2n$ in the definition of $\HHiota(\sigma)$. Since for these $(-1)^x = \text{sign}(\sigma(x)-\sigma(y)) = 1$, they simplify to $(-1)^y$, and their total contribution is $0$. In other words, we can rotate and delete $2n$ from each permutation, and the bi-alternating inversion number is preserved. Hence (2) and (3) are equivalent as well.

It is perhaps worth remarking that the bi-alternating inversion number $\HHiota$ is no longer invariant to rotations in the even case.
\end{proof}

\begin{rmk*}
Lemma~\ref{bialternating} has a simple interpretation in terms of the petal diagram. The equality $w(\sigma \circ \tau) = \HHiota(\sigma)$ means that the two functions go over the $2n+1$ arcs in different orders. Instead of sorting the oriented arcs by their angles as does the writhe, $\HHiota(\sigma)$ goes over the arcs as they occur in a journey along the planar curve. For example, Figure~\ref{fig-petals}A, demonstrates $w(0413265)=\HHiota(0246153)=-1$. The rotation invariance reflects the arbitrariness of choice of starting point. The transition from $S_{2n+1}$ to $S_{2n}$ corresponds to the act illustrated in Figure~\ref{fig-moves}B.
\end{rmk*}

\subsection{Efficient Computation}
If we take a naive approach to computing the writhe of $\pi \in S_{2n+1}$, the required time would be quadratic. It turns out that some re-arrangement can help.

\begin{prop}\label{compwrithe}
Given $\pi \in S_{2n+1}$, the writhe $w(\pi)$ can be computed in time $O(n\log n)$.
\end{prop}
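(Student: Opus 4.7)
The plan is to exploit the reformulation of $w(\pi)$ as a graphical inversion number given in the introduction:
$$w(\pi) \;=\; (2n+1)n \;-\; 2\,\text{inv}_G(\pi),$$
where $G$ is the clockwise tournament on $\Z_{2n+1}$ with edges $i \to i+j$, $1 \le j \le n$. It therefore suffices to evaluate $\text{inv}_G(\pi) = \sum_{i=0}^{2n} c_i$, where $c_i$ denotes the number of $j \in \{1,\dots,n\}$ satisfying $\pi(i) > \pi(i+j)$.

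The key observation is that $c_i$ is a rank query inside a sliding circular window of length $n$: namely, the number of entries in the tuple $(\pi(i+1),\dots,\pi(i+n))$, with indices taken mod $2n+1$, that are smaller than $\pi(i)$. I would maintain a Fenwick tree (binary indexed tree) over the value alphabet $\{0,\dots,2n\}$, initialized to contain $\pi(1),\dots,\pi(n)$. Then, for each $i = 0,1,\dots,2n$ in turn, I would (a) query the tree for the number of stored entries less than $\pi(i)$, which returns $c_i$ in time $O(\log n)$, and (b) update the tree for the next iteration by deleting $\pi(i+1)$ and inserting $\pi(i+n+1)$, each in time $O(\log n)$. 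Summing the $c_i$ yields $\text{inv}_G(\pi)$, from which $w(\pi)$ follows by a single subtraction.

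The whole procedure performs $O(n)$ Fenwick operations each costing $O(\log n)$, for total time $O(n \log n)$, as required. There is no genuine obstacle in this argument: the only care needed is in the circular index bookkeeping, verifying that after each slide the tree stores the correct $n$-element window and that the initial window $\pi(1),\dots,\pi(n)$ is set up properly. If one prefers to avoid a sliding-window data structure, an alternative route uses Lemma~\ref{bialternating}: the bi-alternating inversion number $\HHiota(\sigma)$ (with $\sigma = \pi \circ \tau^{-1}$) decomposes into a linear combination of the ordinary inversion counts of $\sigma$ and of its even- and odd-indexed subsequences, each of which can be computed in $O(n \log n)$ by the standard merge-sort inversion count.
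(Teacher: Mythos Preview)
Your argument is correct, but the paper proceeds quite differently. It first passes via Lemma~\ref{bialternating} to $\HHiota(\sigma)$ for $\sigma\in S_{2n}$, and then applies a divide-and-conquer: splitting $\sigma$ into its left and right halves $\sigma_L,\sigma_R$, one has $\HHiota(\sigma)=\HHiota(\sigma_L)+\HHiota(\sigma_R)+2\,lk_{LR}(\sigma)$, where the cross term $lk_{LR}$ is computed in linear time via a running alternating-sign count. The recursion yields $O(n\log n)$ and is explicitly modeled on the petal-diagram manipulation of Figure~\ref{fig-moves}C. Your primary route, by contrast, stays with the clockwise-tournament formulation and reduces to rank queries in a sliding circular window handled by a Fenwick tree; this is more data-structural and avoids Lemma~\ref{bialternating} altogether. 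Your alternative---observing that $\HHiota(\sigma)=2\bigl(\iota(\sigma|_{\text{even}})+\iota(\sigma|_{\text{odd}})\bigr)-\iota(\sigma)$ and invoking merge-sort inversion counting three times---is arguably the most elementary of all three, though it does not connect to the topological picture that motivates the paper's recursion.
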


\begin{proof}
We can work with $\HHiota(\sigma)$, where $\sigma \in S_{2n}$ is obtained from $\pi$ in time $O(n)$, as described in the proof of Lemma~\ref{bialternating}.

Let $\sigma \in S_{2(l+r)}$, and denote $L = \{0,\dots,2l-1\}$ and $R = \{2l,\dots,2(l+r)-1\}$. We define $\sigma_L \in S_{2l}$ and $\sigma_R \in S_{2r}$, as the induced permutations on the first $2l$ and last $2r$ entries of $\sigma$, respectively. They can easily be computed via $\sigma^{-1}$ in linear time. 

Similar to the linking number in~\cite{petaluma}, we consider
$$ lk_{LR}(\sigma) \;:=\; \frac12 \sum\limits_{y \in L} \;\sum\limits_{x \in R} \;(-1)^{y+x}\;\text{sign}(\sigma(x)-\sigma(y)) \;=\; \frac12 \sum\limits_{t=1}^{2(l+r)} \; y(t)(x(t)-x(t-1)) \;,$$
where
\begin{align*}
x(t)&\;:=\;\sum\limits_{i \in R, \sigma(i) < t}\;(-1)^i &
y(t)&\;:=\;\sum\limits_{i \in L, \sigma(i) < t}\;(-1)^i
\end{align*}
The functions $x(t),y(t)$ are similarly computable via $\sigma^{-1}$ in linear time, and hence so is $lk_{LR}(\sigma)$. 

Finally, note that
$$ \HHiota(\sigma) \;=\; \HHiota(\sigma_L) + \HHiota(\sigma_R) + 2\cdot lk_{LR}(\sigma) \;. $$
Therefore, we can set $l = \left\lceil n/2\right\rceil$ and $r = \left\lfloor n/2\right\rfloor$, and proceed recursively by divide and conquer. After $\log_2 n$ divisions, we reach the base cases $\HHiota(01)=-1$ and $\HHiota(10)=+1$. Hence the total running time is $O(n \log n)$.
\end{proof}

We remark that the above algorithm is inspired by the analogous problem on the writhe of petal diagrams. The division steps correspond to the manipulation presented in Figure~\ref{fig-moves}C.

\subsection{Extreme Values}

How large can the writhe of a permutation $\pi \in S_{2n+1}$ get? By a direct computation, the two simple permutations $\pi(x)=x$ and $\pi(x)=2n-x$ yield $w(\pi) = \pm n^2$. We note that these two permutations correspond to framings of the $(n+1,\pm n)$-torus knots. The following proposition shows that these two examples are the extremes.

\begin{prop}
The writhe $w(\pi)$ of a permutation $\pi \in S_{2n+1}$ is between $\pm n^2$.
\end{prop}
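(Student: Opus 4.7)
The plan is to leverage the identity $w(\pi) = n(2n+1) - 2\,\text{inv}_G(\pi)$ recorded in the introduction, where $G$ is the clockwise tournament on $\Z_{2n+1}$ with edge set $\{i \to i+j : 1 \leq j \leq n\}$. This identity turns the desired inequality $|w(\pi)| \leq n^2$ into the dual bounds
$$ \binom{n+1}{2} \;\leq\; \text{inv}_G(\pi) \;\leq\; n(2n+1) - \binom{n+1}{2}. $$
Post-composing $\pi$ with the value-reversal $v \mapsto 2n-v$ swaps inverted and non-inverted edges of $G$, so the upper bound follows from the lower one applied to the reversed permutation. It therefore suffices to prove $\text{inv}_G(\pi) \geq \binom{n+1}{2}$.

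For that lower bound, I will estimate the inversions vertex by vertex. Fix $i \in \Z_{2n+1}$ and write $v = \pi(i)$. The out-neighbourhood $N^+(i) = \{i+1, \dots, i+n\}$ and the in-neighbourhood $N^-(i) = \{i-1, \dots, i-n\} = \{i+n+1, \dots, i+2n\}$ partition $\Z_{2n+1}\setminus\{i\}$ into two disjoint sets of size $n$. Since there are exactly $v$ vertices $k \neq i$ with $\pi(k) < v$, and at most $n$ of them can fit in $N^-(i)$, pigeonhole gives
$$ |\{k \in N^+(i) : \pi(k) < \pi(i)\}| \;\geq\; \max(v - n, 0). $$

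Summing this over all $i$ and re-indexing by $v = \pi(i) \in \{0, 1, \dots, 2n\}$ (a bijection), I obtain
$$ \text{inv}_G(\pi) \;=\; \sum_{i \in \Z_{2n+1}} |\{k \in N^+(i) : \pi(k) < \pi(i)\}| \;\geq\; \sum_{v=0}^{2n} \max(v - n, 0) \;=\; \sum_{u=1}^{n} u \;=\; \binom{n+1}{2}, $$
and plugging back into $w(\pi) = n(2n+1) - 2\,\text{inv}_G(\pi)$ yields $w(\pi) \leq n^2$; the symmetric argument gives $w(\pi) \geq -n^2$.

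I do not anticipate any genuine obstacle here: the only structural point is the disjointness of $N^+(i)$ and $N^-(i)$, which is immediate in odd cardinality $2n+1$. As a sanity check, the two extremal permutations $\pi(x) = x$ and $\pi(x) = 2n - x$ should saturate the pigeonhole bound at every vertex, which is consistent with their attaining $w = \pm n^2$.
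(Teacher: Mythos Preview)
Your proof is correct and is essentially the same per-vertex pigeonhole argument as the paper's: where the paper bounds $|w_x(\pi)| \le \min(\pi(x),\,2n-\pi(x))$ by distributing the \emph{higher}-valued neighbours of $x$ between the two arcs, you bound the inverted out-edges at $i$ from below by $\max(\pi(i)-n,\,0)$ by distributing the \emph{lower}-valued neighbours---the same inequality under the translation $w_x = \pi(x) - 2a_x$ implicit in $w(\pi)=n(2n+1)-2\,\mathrm{inv}_G(\pi)$. The only cosmetic difference is that the paper handles both signs at once via the triangle inequality $|w|\le\sum_x|w_x|$, whereas you obtain the lower bound by the value-reversal symmetry.
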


\begin{proof}
First consider all $2n$ pairs $x,y \in \Z_{2n+1}$ where $\pi(x)=0$. Clearly, each $y \in \{x+1,\dots,x+n\}$ contributes $+1$, and the other $n$ contribute $-1$ each, so they cancel out. The contribution of the next $2n-1$ pairs $x,y$ where $\pi(y) > \pi(x) = 1$ is $\pm 1$, depending on the distribution of such~$y$ between the two sides $\{x+1,\dots,x+n\}$ and $\{x-n,\dots,x-1\}$. Similarly, the contribution of the $2n-2$ pairs where $\pi(y) > \pi(x) = 2$ is between $\pm 2$, and so on. For pairs with $\pi(y) > \pi(x) \geq n$, we simply use their number as a bound.

We verify that these considerations yield the desired bound. Write
$$ w(\pi) \;=\; \sum\limits_{x=0}^{2n} w_x(\pi) \;\;\;\;\;\;\;\text{where}\;\; w_x(\pi) \;:=\; \sum\limits_{\pi(y) > \pi(x)} \begin{cases}  
+1 \;&\; y \in \{x + 1, \dots ,x + n\} \\ -1 \;&\; y \in \{x - n, \dots ,x - 1\} \end{cases} $$ 
Then
$$ |w(\pi)| \;\leq\; \sum\limits_{x=0}^{2n} \left|w_x(\pi)\right| \;\leq\; \sum\limits_{x=0}^{2n}\min(\pi(x),2n-\pi(x)) \;=\; n^2 $$
In fact, it follows form the argument, that equality is obtained only by the two examples mentioned above, up to rotation. 
\end{proof}

The symmetric group $S_{2n+1}$ is generated by \emph{adjacent circular transpositions} $\tau_x = (x,x+1) \in S_{2n+1}$ where $x \in \Z_{2n+1}$. Note that $\tau_{2n} = (2n,0)$ is also considered an adjacent transposition. It is easy to verify that $w(\pi \circ \tau_x) - w(\pi) = \pm 2$, depending on the circular orientation of $\pi(x)$, $\pi(x+1)$ and $\pi(x+n+1)$. Consequently, the permutations $\pi(x)=x$ and $\pi(x)=2n-x$ are at least $n^2$ transpositions apart from each other. 

This is relevant to \emph{circular bubble sort}. How many adjacent circular transposition are needed rearrange a circular array of $N$ numbers in a given way? An optimal algorithm with running time~$O(N^2)$ was given by Jerrum~\cite{jerrum1985complexity}. In the case of sorting the permutation $\pi(x)=x+\left\lfloor {N}/{2}\right\rfloor$, at least $\left\lfloor {N^2}/{4}\right\rfloor$ swaps are required due to the distance of each number from its destination. This has recently been proved to be the worst case~\cite{van2014upper}. By the above discussion, writhe considerations easily yield a very close bound of $(N-1)^2/4$ transpositions, for $\pi(x)=N-x$. This case, unlike the former, has the advantage of staying relevant in the relaxed setting of sorting up to rotations.

\subsection{Circular Rank Correlation}\label{stat}

Recall the left rotation invariance of the writhe, $w(\pi) = w(\rho^k \circ \pi)$ where $\rho(x)=x+1$ for $x \in \Z_{2n+1}$. Averaging $w(\rho^k \circ \pi)$ over $0 \leq k \leq 2n$ one obtains
$$ w(\pi) \;=\; \sum\limits_{0 \leq x < y \leq 2n} \alpha_n(y-x) \cdot \beta_n(\pi(y)-\pi(x)) $$
where $\alpha_n,\beta_n : \Z_{2n+1} \setminus \{0\} \to [-1,1]$ are
$$ \alpha_n(d) \;:=\; \begin{cases} +1 & d \in \{+1,\dots,+n\} \\ -1 & d \in \{-n,\dots,-1\} \end{cases} \;\;\;\;\;\;\;\;\;\; 
\beta_n(d) \;:=\; \begin{cases} +1-\tfrac{2|d|}{2n+1} & d \in \{+1,\dots,+n\} \\ -1+\tfrac{2|d|}{2n+1} & d \in \{-n,\dots,-1\} \end{cases} $$
In words, $\alpha_n(y-x)$ only tells from which side $y$ is closest to $x$, while $\beta_n(y-x)$ is linearly dependent on their distance, giving larger weight to close pairs.

Put this way, the writhe assumes a statistical interpretation as a nonparametric statistic for circular correlation, of a certain type that we turn to describe. See~\cite{fisher1995statistical,mardia1975statistics} for overview of circular data analysis. 

Suppose that $(\theta_1,\phi_1),\dots,(\theta_N,\phi_N)$ are iid samples from some unknown distribution on the torus $S^1 \times S^1$. Without parametric assumptions on the distribution, we rely only on the ordering of~$\theta_i$ and~$\phi_i$ around the circle. Thus let the \emph{circular ranks} $r_1,\dots,r_N \in \Z_N$ and $s_1,\dots,s_N \in \Z_N$ have respectively the same circular orderings as $\theta_1,\dots,\theta_N$ and $\phi_1,\dots,\phi_N$. Suppose that we want to assess how well the~$\phi_i$ and the~$\theta_i$ tend to be related by a one-to-one continous map $S^1 \to S^1$. The case of complete dependance corresponds to $r_i = \pm s_i+k$, while if~$\theta_i$ and~$\phi_i$ are independent then $r_i = \pi(s_i)$ where $\pi \in S_N$ uniformly.

Our treatment is analogous to the case of linear rank correlation, with samples $\in \R^2$, where the common nonparametric statistics are Kendall's tau and Spearman's rho. A simple general form of linear correlation coefficient was given by Daniels~\cite{daniels1944relation}. In a similar spirit, here it is useful to consider the following general form of circular rank correlation.
$$ R_{fg} \;=\; \sum_{1\leq i<j \leq N} f\left(\frac{r_j-r_i}{N}\right) \cdot g\left(\frac{s_j-s_i}{N}\right) $$ 
where $f$ and $g$ are odd functions with period $1$. Some special cases are:
\begin{enumerate}
\item If $N$ is odd and the ranks are related by $s_i = \pi(r_i)$ then the writhe $w(\pi)$ is exactly $R_{\alpha\beta}$ where $\alpha(t) = \text{sign}(t) $ and $\beta(t) = \text{sign}(t)(1-2|t|)$ for $t \in (-\tfrac12,\tfrac12)$. 
\item Fisher and Lee's statistic $\Delta_N$ records the tendency of clockwise triples $r_i,r_j,r_k$ to correspond to clockwise triples $s_i,s_j,s_k$. It is equivalent to~$R_{\beta\beta}$.~\cite{fisher1982nonparametric,fisher1995statistical} 
\item An alternative measure by Fisher and Lee, following Mardia, is given by $\Pi_N = R_{\gamma\gamma}$ where $\gamma(t) = \sin(2 \pi t)$.~\cite{fisher1995statistical,mardia1975statistics}
\end{enumerate}

Of course, the choice of statistical test depends on the assumptions on the model of association between $\theta_i$ and~$\phi_i$. In some applications there is no reason to assume symmetry between the two variables, e.g., due to different nature of measurement errors. In such cases, it would be interesting if an asymmetric measure such as the writhe could fit in. We are not aware of any asymmetric nonparametric statistic previously considered for circular rank correlation.

For later reference, we cite the limit theorem for Fisher and Lee's test~\cite{fisher1982nonparametric}: Under the hypothesis that the two variable $\theta_i,\phi_i$ are independent, the normalized distribution of $\Delta_N$ converges to a non-Gaussian limit distribution, given by  $\frac{1}{N}R_{\beta\beta} \to \tfrac{1}{\pi^2}\sum_{n=1}^{\infty}\sum_{m=1}^{\infty}\frac{1}{nm}L_{nm}$, where $L_{nm}$ are iid Laplace random variables, with probability density~$f_L(x) = \frac12 e^{|x|}$.

\section{Moments of Writhe}\label{section-moments}

Here and throughout, the random variable $w$ is the writhe of a random permutation, uniformly sampled from $S_{2n+1}$, or equivalently, the framing number of a random framed knot. In this section we prove Theorem~\ref{moments}, finding the asymptotic behavior of the moments of $w$. The analysis elaborates on tools used in~\cite{petaluma} for finding the limiting moments of other knot invariants. 

\begin{rmk*}
Recall the Gauss diagram formula of the writhe, given in Section~\ref{section-knots}. By feeding it into the computer program that we used there~\cite{petaluma,supplementarymaterial}, we can automatically prove $E[w^k]=O(n^k)$. This is the expected behavior of a first order invariant, cf. the main Conjecture in~\cite{petaluma}. But here we are interested also in the constant coefficient for each $k$. 
\end{rmk*}

\subsection{Never a Dull Moment}\label{dull}

We start by deriving various expressions for the moments of $w$. By the reflection symmetry $w(f)=-w(-f)$, odd moments of the writhe vanish. In the following we assume $k$ is even. 

Thanks to Lemma~\ref{bialternating}, we study the distribution of the writhe via the bi-alternating inversion number.
$$ E\left[w^k\right] \;=\; E_{\pi}\left[ \left(\sum_{0 \leq y < x \leq 2n} (-1)^{x+y} \;\text{sign}(\pi(x)-\pi(y))\right)^k \right] $$
By the linearity of the expectation,
\begin{equation}
E\left[w^k\right] \;=\; \sum_{\mathbf{y}<\mathbf{x}} \; E_{\pi}\left[\prod_{i=1}^k \text{sign}(\pi(x_i)-\pi(y_i))\right] \;\cdot\; \prod_{i=1}^k (-1)^{x_i+y_i} \label{exy}
\end{equation}
where the notation $\mathbf{y}<\mathbf{x}$ for $\mathbf{x},\mathbf{y} \in \{0,\dots,2n\}^k$ stands for the conjunction of $y_i < x_i$ for every $i \in \{1,\dots,k\}$. This sum needs some rearrangement.

First, we encode the ordering and the equalities between the entries of $\mathbf{x}$ and $\mathbf{y}$ by a directed graph, $G = (V,E)$. Both the edges and the vertices of $G$ are labeled:
\begin{align*}
E(G) \;&=\; (e_1,\dots,e_k) \;=\; \left((v_{11},v_{12}),\dots,(v_{k1},v_{k2})\right) \\
V(G) \;&=\; e_1 \cup \dots \cup e_k \;=\; \{1,\dots,v\}
\end{align*}
For each $\mathbf{x},\mathbf{y} \in \{0,\dots,2n\}^k$, the graph $G = G_{\mathbf{xy}}$ is defined so that the map sending $y_i$ to $v_{i1}$ and $x_i$ to $v_{i2}$ is an order preserving bijection. For example, $x_i<x_j$ iff $v_{i2}<v_{j2}$, and $x_i=y_j$ iff $v_{i2}=v_{j1}$,~etc. Note that $G$ is acyclic, with $v \leq 2k$ vertices, none of which are isolated, and possibly with parallel edges. 

By the condition $\mathbf{y}<\mathbf{x}$ on each term in the sum, the vertex labeling of its graph is monotone with respect to the directed edges, i.e., $v_{i1} < v_{i2}$ for all $i$. We denote the collection of all such graphs by~$\mathcal{G}_k$.

Let $G \in \mathcal{G}_k$. We define the \emph{parity} function $\varepsilon_{\mathbf{xy}} : V(G_{\mathbf{xy}}) \to \{\pm 1\}$, such that $\varepsilon_{\mathbf{xy}}(v_{i1}) = (-1)^{y_i}$ and $\varepsilon_{\mathbf{xy}}(v_{i2}) = (-1)^{x_i}$. We also use vector notation, $\varepsilon := (\varepsilon(1),\dots,\varepsilon(v)) \in \{\pm 1\}^v$. Here and below $v:=|V(G)|$. 

We rewrite the above sum (\ref{exy}), dividing into cases according to graphs and parity.
\begin{equation}
E[w^k] \;=\; \sum_{G \in \mathcal{G}_k} \; \sum_{\mathbf{\varepsilon} \in \{\pm\}^v} C(G,\varepsilon) \; A(G) \; T(G,\varepsilon) \label{cat}
\end{equation}
where
\begin{align*}
C(G,\varepsilon) \;&:=\; \#\left\{ \mathbf{x},\mathbf{y} \in \{0,\dots,2n\}^k \;:\; G_{\mathbf{xy}}=G,\;\varepsilon_{\mathbf{xy}}=\varepsilon\right\} \\
T(G,\varepsilon) \;&:=\; \prod_{i=1}^k \varepsilon(v_{i1})\varepsilon(v_{i2})
\end{align*}
and 
$$ A(G) \;=\; E_{\pi}\left[\prod_{i=1}^k \text{sign}(\pi(x_i)-\pi(y_i)) \right] $$
for any $\mathbf{x},\mathbf{y}$ such that $G_{\mathbf{xy}}=G$. 

Of course, we have to verify that $A(G)$ is well-defined, i.e., the right-hand side does not depend on $\mathbf{x},\mathbf{y}$. Indeed, it only depends on the ordering of the $|V(G)|$ values that $\pi$ assigns to the entries in $\mathbf{x}$ and $\mathbf{y}$, and since $\pi$ is uniform the $|V(G)|!$ orders are equally likely. In general, we define the \emph{average sign} of a $v$-vertex directed graph $G$,
$$ A(G) \;:=\; \frac{1}{v!} \;\sum_{\sigma \in S_v} \;\prod\limits_{(i,j) \in E(G)} \text{sign}(\sigma(j)-\sigma(i)) $$ 
This parameter was considered by Kalai~\cite{kalai2002fourier} in the study of Fourier expansions for social choice functions.

To proceed with the analysis of $E[w^k]$ we borrow a definition and a lemma from~\cite{petaluma}. 

\begin{lemma}[\cite{petaluma}, Lemma 11] \label{runs}
$\;$ \\
Let $\varepsilon \in \{\pm1\}^v$. Denote by $z(\varepsilon)$ be the number of runs of $+$'s in $\varepsilon$. For example, 
$$z(+++--)=1,\;\;\;z(+-+)=2$$
Equivalently,
$$ z(\varepsilon_1,\dots,\varepsilon_v) \;=\; \frac{(v+1) +  \varepsilon_1 - \varepsilon_1\varepsilon_2 - \varepsilon_2\varepsilon_3 - \cdots - \varepsilon_{v-1}\varepsilon_v + \varepsilon_v}{4} $$
Then 
$$ \#\{0 \leq t_1 < \cdots < t_v \leq 2n \;:\; \forall i \; (-1)^{t_i} = \varepsilon_i \} \;=\; \binom{n + z(\mathbf{\varepsilon})}{v}
\;=\; \sum_{r=0}^{v} \binom{n}{v-r} \binom{z(\mathbf{\varepsilon})}{r}$$
\end{lemma}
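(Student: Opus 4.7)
The plan is to encode each increasing sequence by its gaps and reduce the count to a standard stars-and-bars problem. Given $0 \leq t_1 < \cdots < t_v \leq 2n$, set $d_0 = t_1$, $d_i = t_{i+1} - t_i$ for $1 \leq i \leq v-1$, and $d_v = 2n - t_v$, so that $d_0 + d_1 + \cdots + d_v = 2n$. Each $d_i$ inherits a parity from $\varepsilon$: the endpoint gaps $d_0$ and $d_v$ have the parities of $\varepsilon_1$ and $\varepsilon_v$ (viewing $+1$ as even), and for $1 \leq i \leq v-1$ one has $(-1)^{d_i} = \varepsilon_i \varepsilon_{i+1}$, so $d_i \geq 2$ when $\varepsilon_i = \varepsilon_{i+1}$ and $d_i \geq 1$ otherwise.

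Writing each $d_i = m_i + 2 c_i$ with $c_i \geq 0$ and $m_i$ the forced minimum yields $m_0 = (1-\varepsilon_1)/2$, $m_v = (1-\varepsilon_v)/2$, and $m_i = (3 + \varepsilon_i \varepsilon_{i+1})/2$ for interior $i$, so that the count of admissible sequences equals the number of nonnegative solutions of $c_0 + \cdots + c_v = (2n - \sum_i m_i)/2$. Stars-and-bars on $v+1$ variables then gives $\binom{v + (2n - \sum_i m_i)/2}{v}$. A direct summation produces $\sum_i m_i = \tfrac12\bigl(3v - 1 - \varepsilon_1 - \varepsilon_v + \sum_{i=1}^{v-1}\varepsilon_i\varepsilon_{i+1}\bigr)$, and the closed form for $z$ stated in the lemma rearranges to $\sum_{i=1}^{v-1}\varepsilon_i\varepsilon_{i+1} = (v+1) + \varepsilon_1 + \varepsilon_v - 4z$; these combine to $\sum_i m_i = 2(v-z)$, so the count equals $\binom{n+z}{v}$. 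The second equality in the lemma is then immediate from Vandermonde's identity $\binom{n+z}{v} = \sum_{r}\binom{n}{v-r}\binom{z}{r}$.

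The closed-form expression for $z(\varepsilon)$ itself is a separate short check, best done by induction on $v$: appending $\varepsilon_{v+1}$ increments $z$ exactly when a new run of $+$'s begins (that is, when $\varepsilon_v = -1$ and $\varepsilon_{v+1} = +1$), and the rational expression is seen to shift by the same amount in that case and to remain unchanged in the other three cases. The main potential obstacle is keeping the signs straight through the collapse $\sum_i m_i = 2(v-z)$, but this is pure bookkeeping once the identity for $z$ is in hand; everything else is routine.
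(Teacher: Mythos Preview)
Your argument is correct in every detail: the gap variables $d_i$ have exactly the parity and positivity constraints you state, the shift $d_i = m_i + 2c_i$ gives a clean bijection with nonnegative $(v{+}1)$-tuples summing to $n - v + z$, and the bookkeeping $\sum_i m_i = 2(v-z)$ checks out once one substitutes the closed form for $z$. The paper itself does not write out a proof here, saying only that it goes ``by straightforward bijections'' and referring to~\cite{petaluma}; your gap-encoding followed by stars-and-bars is precisely such a bijection, so your approach is entirely in line with what the paper indicates.
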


The proof of Lemma~\ref{runs} is by straightforward bijections. See~\cite{petaluma} for more details. Now we can substitute 
$$C(G,\varepsilon) = \binom{n + z(\mathbf{\varepsilon})}{|V(G)|} \;.$$
Note that $A(G)$ and $T(G,\varepsilon)$ do not depend on the underlying parameter $n$. Hence (\ref{cat}) expresses $E[w^k]$ as a polynomial in $n$, computable in finite time for each $k$. For example, for $k=2,4$ we evaluate all terms and obtain the following.
\begin{prop}\label{moments24}
Let $w$ be the writhe of $\pi \in S_{2n+1}$, sampled uniformly at random. Then
$$ E\left[w^2\right] \;=\; \frac{2n^2+n}{3} \;\;\;\;\;\;\;\;\;\; E\left[w^4\right] \;=\; \frac{76n^4+44n^3-49n^2-26n}{45} $$
\end{prop}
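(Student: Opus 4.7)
The plan is to apply directly the framework of Section~\ref{dull}. After substituting $C(G,\varepsilon) = \binom{n + z(\varepsilon)}{|V(G)|}$ from Lemma~\ref{runs} into equation~(\ref{cat}), the moment $E[w^k]$ becomes an explicit finite sum indexed by graphs $G \in \mathcal{G}_k$ and parity vectors $\varepsilon \in \{\pm\}^{|V(G)|}$, and is automatically a polynomial in $n$. The proposition then reduces to enumerating $\mathcal{G}_k$ for $k \in \{2,4\}$ and collecting like powers.

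For $k=2$ I would enumerate by the number of vertices $v \in \{2,3,4\}$. The single 2-vertex graph (a double edge from $1$ to $2$) has $A(G) = 1$ and $T(G,\varepsilon) \equiv 1$, and its contribution $\sum_\varepsilon \binom{n+z(\varepsilon)}{2} = 3\binom{n+1}{2} + \binom{n}{2} = 2n^2 + n$ is already close to the claimed answer. The three topological types of 3-vertex graphs (sharing the lower, middle, or upper vertex), each appearing in two edge orderings, have $A(G) = \pm 1/3$ by direct averaging over $S_3$, and their $T(G,\varepsilon) = \varepsilon(u)\varepsilon(u')$ picks out the two degree-one vertices. The 4-vertex graphs contribute zero, because the two edges are supported on disjoint vertex pairs, so the induced relative orders under a uniform $\pi$ are independent, forcing $A(G) = E[\text{sign}] \cdot E[\text{sign}] = 0$. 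Summing the nonzero contributions yields $\tfrac{2n^2+n}{3}$.

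For $k=4$ the same recipe applies with a larger enumeration. Each $G \in \mathcal{G}_4$ has four ordered edges and $v \in \{2,\dots,8\}$ vertices; the $v=8$ case of four disjoint edges contributes zero by the same independence argument. For each remaining isomorphism class I would compute $A(G)$ as an average over $S_v$ with $v \le 7$, read off $T(G,\varepsilon) = \prod_{u \in V(G)} \varepsilon(u)^{\deg(u)}$ from the degree sequence, and evaluate the $\varepsilon$-sum against $\binom{n+z(\varepsilon)}{v}$ via Lemma~\ref{runs}. Combining the contributions produces the claimed degree-4 polynomial.

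The main obstacle will be bookkeeping: the list of ordered graphs for $k=4$ runs into the dozens, and the cancellations are delicate, since individual graphs generically produce polynomials of degree up to $2k = 8$ in $n$ whereas the total must collapse to degree $k$. Organizing the enumeration by edge-support multigraph and by degree sequence, and if necessary automating the computation through the Gauss diagram formula of the writhe recalled in the remark before Section~\ref{dull} (as in the computer-assisted approach of~\cite{petaluma,supplementarymaterial}), keeps the accounting tractable. A built-in consistency check is the leading coefficient: Theorem~\ref{moments} predicts $3\lambda_2^2 + 6\lambda_4 = 3 \cdot \tfrac{4}{9} + 6 \cdot \tfrac{8}{135} = \tfrac{76}{45}$ for the limit of $E[w^4]/n^4$, matching the leading term of the asserted polynomial.
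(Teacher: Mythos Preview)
Your proposal is correct and follows exactly the approach the paper takes: the paper simply states that once $C(G,\varepsilon)=\binom{n+z(\varepsilon)}{|V(G)|}$ is substituted into~(\ref{cat}), $E[w^k]$ is a polynomial in $n$ computable in finite time, and that evaluating all terms for $k=2,4$ gives the formulas. Your write-up in fact supplies more of the casework than the paper does, and your consistency check against the leading coefficient from Theorem~\ref{moments} is a good sanity check.
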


The treatment of the leading term of the $k$th moment is similar to~\cite{petaluma}, with partitions or patterns replaced by graphs. Recall that the functions $\chi(I)(\varepsilon_1,\dots,\varepsilon_v) \;=\; \prod_{i \in I}\varepsilon_i$, where $I \subseteq \{1,\dots,v\}$, constitute an orthogonal basis of the $2^v$-dimensional space of functions $\{\pm 1\}^v \to \R$, and $\left\langle \chi(I),\chi(J) \right\rangle = \sum_{\varepsilon} \chi(I)(\varepsilon) \chi(J)(\varepsilon) = \nobreak 2^v\delta_{IJ}$. The \emph{degree} of a function $f:\{\pm 1\}^v \to \R$ is defined as the largest $|I|$ in its representation, $f = \sum_I\phi_I\chi(I)$, where $\phi_I \neq 0$. By Lemma~\ref{runs}, $\deg z = 2$ as a function of $\varepsilon \in \{\pm 1\}^v$. For $r \leq v/2$, one can verify that $\deg \tbinom{z}{r} = 2r$. In these terms, (\ref{cat}) becomes
\begin{equation}\label{act}
E[w^k] \;=\; \sum_{G \in \mathcal{G}_k} A(G) \left\langle C(G,\varepsilon),  T(G,\varepsilon) \right\rangle \;=\; \sum_{G \in \mathcal{G}_k} A(G) \; \sum_{r=0}^{v} \;\binom{n}{v-r} \left\langle \binom{z}{r}, \chi(I_G) \right\rangle 
\end{equation}
where $I_G$ is the set of vertices in $G$ with odd total degree. Note that $|I_G|$ is even, and $|I_G| \geq 2(v-k)$ with equality iff all vertices have total degree $1$ or $2$. The terms $\tbinom{n}{v-r}\langle\tbinom{z}{r},\chi(I_G)\rangle$ divide into cases:
\begin{itemize}
\item If $r > v - k$ then $\tbinom{n}{v-r} = O(n^{k-1})$.
\item If $r < v - k$ then $\langle\tbinom{z}{r},\chi(I_G)\rangle = 0$, since $\deg\tbinom{z}{r} = 2r < 2v-2k \leq |I_G|$.
\item If $r = v - k < |I_G|/2$ then similarly the term vanishes by orthogonality.
\item If $r = v - k = |I_G|/2$, we have $\tbinom{n}{k} \langle\tbinom{z}{|I_G|/2},\chi(I_G)\rangle$.
\end{itemize}
Note that this means $E[w^k] = O(n^k)$.  

We characterize the graphs that make nonzero contribution to the sum in~(\ref{act}). Which components in the expansion of $\tbinom{z}{r}$ according to the basis $\{\chi(I)\}_I$ have degree $2r$? By examining the expression for $z(\varepsilon)$ in Lemma~\ref{runs}, every $I$ that is a union of $r$ disjoint pairs of consecutive numbers appears in $\tbinom{z}{r}$ with coefficient $(-\frac14)^r$. Denote by $\mathcal{G}^*_k$ the graphs in $\mathcal{G}_k$, with $I_G$ of this form and maximal total degree $2$. Every $v$-vertex graph in $\mathcal{G}^*_k$ contributes $\tbinom{n}{k} (-\frac14)^r 2^v \;=\; \tbinom{n}{k} (-1)^v 2^{2k-v}$.

\newcommand{\arcsgraph}[2]
{\tikzstyle{every node}=[circle, draw, minimum width=2pt]
\begin{tikzpicture}[baseline=-4pt]
\foreach \x in {1,...,#1} \filldraw (\x*0.4,0) circle (1pt);
\foreach \x/\y/\z in {#2} \draw[shorten >=1pt,-,decoration={markings,mark=at position 0.65 with {\arrow{>}}},postaction={decorate}] (\x*0.4,0) .. controls +(90-\z:0.2) and +(90+\z:0.2) .. (\y*0.4,0) ;
\end{tikzpicture}}

\begin{example*}
Let $G = \left(\arcsgraph{9}{1/2/135,1/4/45,3/4/135,5/8/30,5/6/135,7/9/135}\right) \in \mathcal{G}^*_6$, where the $v=9$ vertices are labeled by $\{1,\dots,9\}$ from left to right, and the $k=6$ edge are labeled arbitrarily. Then $I_G = \{2,3,6,7,8,9\}$, which is composed of $r=3$ consecutive pairs. Its contribution to $E[w^6]$ is equal to $-2^3\tbinom{n}{6}$.
\end{example*}

Dividing (\ref{act}) by $n^k$, and neglecting lower order terms, we write the limiting moments of the normalized writhe as
\begin{equation}\label{mu1}
\mu_k \;:=\; \lim_{n \to \infty} E\left[\left(\frac{w(\pi_{2n+1})}{n}\right)^k\right] \;=\; \frac{1}{k!} \sum_{G \in \mathcal{G}_k^*} (-1)^{v(G)} 2^{2k-v(G)} A(G)
\end{equation}
Let $\mathcal{G}^2_k$ be the $2$-regular graphs in $\mathcal{G}_k$. Each $G \in \mathcal{G}^*_k$ is obtained from \emph{breaking} some $G' \in \mathcal{G}^2_k$, defined as splitting some subset of the degree-two vertices into pairs of degree-one vertices, while preserving the order of the vertex labels. We denote breaking by $G \prec G'$. 

\begin{example*}
We break $3$ vertices of the graph $G' = \left(\arcsgraph{6}{1/2/135,1/3/45,2/3/135,4/6/30,4/5/135,5/6/135}\right) \in \mathcal{G}^2_6$ to obtain $G$ from the previous example. A different labeled graph for the same breaking is $\left(\arcsgraph{9}{1/3/135,1/4/45,2/4/135,5/9/30,5/6/135,7/8/135}\right)$.
\end{example*}

Note that there are $2^k$ ways to break a $k$-vertex graph at a subset of its vertices, and each breaking of $r$ vertices corresponds to $2^r$ labeled graphs in $\mathcal{G}_k^*$, as each pair of resulting degree-one vertices can be ordered in $2$ ways. Therefore, (\ref{mu1}) becomes
\begin{equation}\label{mu2}
\mu_k \;=\; \frac{2^{k}}{k!} \sum_{G' \in \mathcal{G}_k^2} B(G') \;\;\;\;\;\;\;\;\;\; B(G') \;:=\; \sum_{G \prec G'} (-1)^{v(G)} A(G)
\end{equation}
The sum on the right is only over the $2^k$ ways to break $G'$. Instead of caring for the $2^{v-k}$ different vertex labelings of the same breaking, we have multiplied each breaking by $2^{v-k}$.

Let $\sigma \in S_k$. We define a $2$-regular directed graph $G_{\sigma}$, with vertices $\{1,\dots,k\}$ and edges $\{i,\sigma(i)\}$ which are always directed from the smaller number to the larger one. We replace the summation over $G' \in \mathcal{G}_k^2$ in~(\ref{mu2}) by summation over $\sigma \in S_k$. 
\begin{equation}\label{mu3}
\mu_k \;=\; \sum_{\sigma \in S_k} 2^{k-|\text{cycles}(\sigma)|} \; B(G_{\sigma})
\end{equation}
Indeed, we multiplied by $k!$ since each $G_{\sigma}$ corresponds to $k!$ \emph{edge} labelings of $G' \in \mathcal{G}_k^2$. We then divided by $2$ once for every cycle of $\sigma$, since both orientations of a cycle in $\sigma$ yield the same cycle in $G_{\sigma}$. Note that $G_{\sigma}$ may contain parallel edges in case of cycles of length $2$, in which case we divide by $2$ as well, but for another reason -- we have half the number of edge labelings for $G_{\sigma}$.

\subsection{Euler Numbers and Eulerian Numbers}\label{section-moments-2}

We suspend for a while our efforts to rewrite $\mu_k$, for the sake of finding $A(G)$ and $B(G')$. To that end, we'll use some classical notions in Enumerative Combinatorics. An \emph{alternating} permutation $\pi \in S_m$, is defined by the condition $\pi(1) < \pi(2) > \pi(3) < \pi(4) > \dots \;\pi(m)$. The number of alternating permutations in $S_m$, here denoted by $A_m$, is called the \emph{Euler number}~\cite{stanley1999enumerative}. Their exponential generating functions was given by Andr\'{e} as follows.
\begin{align*}	
\sec x \;&=\; 1 + \frac{1}{2!} x^2 + \frac{5}{4!}x^4 + \frac{61}{6!}x^6 + \dots + \frac{A_{m}}{m!}x^{m} + \dots && m \text{ even}\\
\tan x \;&=\; \frac{1}{1!} x + \frac{2}{3!}x^3 + \frac{16}{5!}x^5 + \frac{272}{7!}x^7 + \dots + \frac{A_m}{m!}x^{m} + \dots && m \text{ odd}
\end{align*}
A \emph{descent} in a permutation $\pi \in S_m$ is a pair $\pi(i) > \pi(i+1)$ where $i \in \{1,\dots,m-1\}$. The number of permutation in $S_m$ with $d$ descents is the \emph{Eulerian number} $A_{m,d}$. The following relation between the Euler numbers and the Eulerian numbers goes back to Euler~\cite{eulerremarques,stanley1999enumerative}.
$$ \sum_{d=0}^{m-1} (-1)^d A_{m,d} \;=\; \begin{cases} (-1)^{(m-1)/2}A_{m} & \text{if $m$ is odd} \\ 0 & \text{if $m$ is even} \end{cases}$$
Finally, the odd Euler numbers are related to the \emph{Bernoulli numbers}, via
$$ B_m \;=\; (-1)^{m/2+1} \frac{m}{2^m (2^m-1)} A_{m-1} \;\;\;\;\;\;\;\;\;\;\;\; \text{for even $m \geq 2$} $$

Let $P_m$ be an oriented path with $m$ edges, and let $C_m$ be a cyclically oriented circle with $m$ edges. The following lemma gives the average sign of oriented paths and cycles.
\begin{lemma}\label{lemmaA}
Let $A(G)$ be as in~(\ref{cat}).
$$ A(P_{m-1}) \;=\; -A(C_{m+1}) \;=\; \begin{cases} \frac{(-1)^{(m-1)/2}}{m!}A_{m} & \text{if $m$ is odd} \\ 0 & \text{if $m$ is even} \end{cases} $$
\end{lemma}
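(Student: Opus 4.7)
My plan is to handle the two graphs separately, then connect them by a simple deletion argument.

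For the path $P_{m-1}$ on $m$ vertices with all edges directed from vertex $i$ to vertex $i+1$, the product $\prod_{(i,j)\in E} \operatorname{sign}(\sigma(j)-\sigma(i))$ reduces to $\prod_{i=1}^{m-1}\operatorname{sign}(\sigma(i+1)-\sigma(i)) = (-1)^{d(\sigma)}$, where $d(\sigma)$ is the number of descents of $\sigma \in S_m$. Grouping permutations by descent count turns this into
$$ A(P_{m-1}) \;=\; \frac{1}{m!}\sum_{d=0}^{m-1}(-1)^d A_{m,d}, $$
and the Euler identity cited just above the lemma evaluates the inner sum as $(-1)^{(m-1)/2}A_m$ when $m$ is odd, and as $0$ when $m$ is even. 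This gives the stated formula for $A(P_{m-1})$.

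For the cycle $C_{m+1}$, I would peel off the vertex carrying the maximum label. Write the cyclic edges as $v_0 \to v_1 \to \cdots \to v_m \to v_0$, and for each $\sigma \in S_{m+1}$ let $v^\star$ be the vertex with $\sigma(v^\star)=m+1$. The two edges incident to $v^\star$ are one incoming (sign $+1$) and one outgoing (sign $-1$), contributing a factor of $-1$ to the product no matter which vertex is maximal. Deleting $v^\star$ leaves the remaining $m-1$ cyclic edges forming a directed path on the other $m$ vertices, and the restriction of $\sigma$ to these vertices is a uniformly random bijection onto $\{1,\dots,m\}$. By cyclic symmetry the position of $v^\star$ is uniform among the $m+1$ vertices, so averaging yields
$$ A(C_{m+1}) \;=\; (-1)\cdot A(P_{m-1}), $$
which is exactly the claimed identity $A(P_{m-1}) = -A(C_{m+1})$.

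The one subtle point I expect to justify carefully is that after deleting $v^\star$, the remaining induced graph really is a directed path $P_{m-1}$ in the same sense as in the first part of the lemma: the cyclic orientation restricts to a coherent orientation of the arc, so the vertices along the path are consistently ordered from tail to head and the descent interpretation applies verbatim. Once this is observed, no further computation is needed; the two ingredients combine to prove both equalities of the lemma at once.
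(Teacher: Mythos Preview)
Your proof is correct and follows essentially the same approach as the paper: interpreting the path product as $(-1)^{d(\sigma)}$ and applying the Euler identity, then for the cycle removing the vertex carrying the maximum value, noting its two incident edges contribute a fixed factor $-1$, and observing that the restriction to the remaining $m$ vertices is a uniform permutation on a directed path $P_{m-1}$. The remark about cyclic symmetry and the uniform position of $v^\star$ is not actually needed (the conditional contribution is $-A(P_{m-1})$ regardless of where the maximum sits), but it does no harm.
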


\begin{examples*}
$A\left(\arcsgraph{2}{1/2/135,2/1/315}\right) = -1$, 
~$A\left(\arcsgraph{4}{1/2/135, 2/3/135, 3/4/135, 4/1/315}\right) = - A\left(\arcsgraph{3}{1/2/115, 2/3/115}\right) = \tfrac13$,
~$A\left(\arcsgraph{5}{1/2/115, 2/3/115, 3/4/115, 4/5/115}\right) = \frac{2}{15}$.
\end{examples*}

\begin{proof}
The edges of $P_{m-1}$ are $(i,i+1)$ for $i \in \{1,\dots,m-1\}$. The product $\prod_i\text{sign}(\sigma(i+1)-\sigma(i))$ is exactly $(-1)^d$ where $d$ is the number of descents in $\sigma$. Hence the average over $\sigma$ is the alternating sum $\tfrac{1}{m!}\sum_d(-1)^d A_{m,d}$, and we use the above identity for the Eulerian numbers.

In the cycle $C_{m+1}$, consider the vertex $i = \sigma^{-1}(m+1)$ given $\sigma \in S_{m+1}$. Note that the product $\text{sign}(\sigma(i)-\sigma(i-1)) \cdot \text{sign}(\sigma(i+1)-\sigma(i))$ equals $(+1)\cdot(-1)$ regardless of the rest of $\sigma$. The remaining $m$ vertices constitute a path $P_{m-1}$ on which $\sigma$ is uniform in $S_{m}$, as before. Hence $A(C_{m+1}) = - A(P_{m-1})$.
\end{proof}

By the next set of observations, knowing the average sign $A(G)$ for oriented paths and cycles is all we need to evaluate (\ref{mu2}).

\begin{lemma}\label{lemma3}
Let $A(G)$ be the average sign of $G$ as before.
\begin{enumerate}
\item If $G_1$ is obtained from $G_2$ by flipping the direction of one edge, then $A(G_1) = -A(G_2)$.
\item If the number of edges in $G$ is odd, then $A(G) = 0$.
\item If $G$ is the disjoint union of connected components $G_1,\dots,G_l$ then $A(G) = \prod_i A(G_i)$.
\end{enumerate}
\end{lemma}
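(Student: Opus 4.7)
My plan is to exploit elementary symmetries of the defining sum
$$A(G) \;=\; \frac{1}{v!}\sum_{\sigma \in S_v}\;\prod_{(i,j)\in E(G)} \text{sign}(\sigma(j)-\sigma(i)),$$
treating each part separately; all three parts reduce to either a termwise sign flip, an involution on $S_v$, or a factorization after conditioning.

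For part (1), I would observe that if $G_1$ differs from $G_2$ by replacing an edge $(i,j)$ with $(j,i)$, then the corresponding factor $\text{sign}(\sigma(j)-\sigma(i))$ gets replaced by $\text{sign}(\sigma(i)-\sigma(j))$, which is its negative for every $\sigma \in S_v$ (since $\sigma$ is injective the signs are nonzero). The product is therefore negated termwise, yielding $A(G_1) = -A(G_2)$.

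For part (2), I would use the reversal involution $\sigma \mapsto \sigma^\vee$ on $S_v$, given by $\sigma^\vee(i) = v+1-\sigma(i)$. Under this bijection each edge factor satisfies $\text{sign}(\sigma^\vee(j)-\sigma^\vee(i)) = -\text{sign}(\sigma(j)-\sigma(i))$, so the full product is multiplied by $(-1)^{|E(G)|}$. When $|E(G)|$ is odd, reindexing the sum over $S_v$ through this involution gives $A(G) = -A(G)$, hence $A(G) = 0$.

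For part (3), write $G = G_1 \sqcup \cdots \sqcup G_l$ with $|V(G_i)| = v_i$ and partition the permutations $\sigma \in S_v$ according to the tuple $(S_1,\dots,S_l)$ of subsets of $\{1,\dots,v\}$ specifying which values $\sigma$ assigns to each component. The crucial observation is that the sign of every edge factor in $G_i$ depends only on the relative ordering of $\sigma$ on $V(G_i)$, so for each admissible tuple of sets the inner sum factors as $\prod_i \bigl(v_i!\,A(G_i)\bigr)$. Since there are $\binom{v}{v_1,\dots,v_l}$ such tuples and $\binom{v}{v_1,\dots,v_l}\prod_i v_i! = v!$, we get $v!\,A(G) = v!\,\prod_i A(G_i)$, as desired. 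The only mildly non-routine point, which is essentially the whole content of part (3), is noticing this factorization-after-conditioning; parts (1) and (2) are straightforward termwise arguments.
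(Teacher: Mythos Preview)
Your proof is correct and follows essentially the same approach as the paper, which also notes that all three parts are straightforward from the definition. The only cosmetic difference is in part~(2): you reverse the \emph{values}, $\sigma^\vee(i)=v+1-\sigma(i)$, which negates each edge factor directly, whereas the paper reverses the \emph{domain}, $\overline{\sigma}(i)=\sigma(v+1-i)$; both involutions yield the factor $(-1)^{|E(G)|}$, and your choice is arguably the more transparent one.
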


\noindent \emph{Proof.} 
All parts are fairly straightforward from the definition:
\begin{enumerate}
\item Note that $\text{sign}(\sigma(i)-\sigma(j))$ changes for exactly one edge.
\item Equivalently summing over $\overline{\sigma}(i) := \sigma(v+1-i)$ yields a factor of $(-1)^{\#\text{edges}}$. 
\item The restriction of $\sigma \in S_m$ to each connected component is independently uniform.\hfill\qed
\end{enumerate}

We turn to compute $B(G')$, the alternating sum of $A(G)$ over all the breakings $G \prec G'$, where $G'$ is a disjoint union of cycles. Note that the above-mentioned three properties naturally carry over from $A$ to $B$. Hence it suffices to know $B$ for an even oriented cycle.

\begin{lemma}\label{lemmaB} For $m \geq 2$,
$$ B(C_m) \;=\; \frac{- 2^m B_{m}}{m!}  $$
\end{lemma}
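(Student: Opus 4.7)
The plan is to enumerate the $2^m$ breakings of $C_m$, reduce each contribution to a product via the multiplicativity of $A$, and repackage the sum as a coefficient of a familiar analytic generating function. A breaking $S \subseteq V(C_m)$ with $|S| = r \geq 1$ splits $C_m$ into $r$ oriented paths $P_{j_1}, \dots, P_{j_r}$ whose edge-counts form a cyclic composition of $m$, with $v(G_S) = m+r$; the $r=0$ term contributes $(-1)^m A(C_m) = -(-1)^m a_{m-2}$ by Lemma~A, where I write $a_j := A(P_j)$, equal to $0$ for odd $j$ and $(-1)^{j/2} A_{j+1}/(j+1)!$ otherwise. A standard marked-vertex argument (distinguish one of the $r$ cuts and traverse $C_m$ starting there) gives the weighted count $N_r(m) := \sum_{|S|=r}\prod_i a_{j_i} = (m/r)\,[x^m]\,f(x)^r$ with $f(x) := \sum_{j \geq 1} a_j x^j$, so that summing over $r \geq 1$,
\begin{equation*}
B(C_m) \;=\; (-1)^m\bigl[-a_{m-2} \;-\; m\,[x^m]\log(1+f(x))\bigr].
\end{equation*}

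The key identification is $1 + f(x) = \tanh(x)/x$: Andr\'e's series $\tan z = \sum_k A_{2k+1} z^{2k+1}/(2k+1)!$ under $z \mapsto ix$ gives $\tanh x = \sum_k (-1)^k A_{2k+1} x^{2k+1}/(2k+1)!$, which matches $x(1+f(x))$ term by term. I would then extract the coefficient via $\log(\tanh x/x) = \log(\sinh x/x) - \log\cosh x$, using the standard Bernoulli expansions
\begin{equation*}
\coth x - \tfrac1x \;=\; \sum_{k\geq 1}\frac{2^{2k} B_{2k}}{(2k)!}x^{2k-1}, \qquad \tanh x \;=\; \sum_{k\geq 1}\frac{2^{2k}(2^{2k}-1) B_{2k}}{(2k)!}x^{2k-1},
\end{equation*}
whose integrals yield $[x^{2k}]\log(\sinh x/x) = 2^{2k} B_{2k}/((2k)(2k)!)$ and $[x^{2k}]\log\cosh x = 2^{2k}(2^{2k}-1)B_{2k}/((2k)(2k)!)$. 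Subtracting and multiplying by $-m = -2k$ gives $-m\,[x^m]\log(1+f(x)) = 2^{2k}(2^{2k}-2)B_{2k}/(2k)!$ when $m = 2k$.

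Finally, plugging the Euler--Bernoulli conversion $A_{2k-1} = (-1)^{k+1}\,2^{2k}(2^{2k}-1)\,B_{2k}/(2k)$ (cited just before Lemma~A) into $a_{m-2} = (-1)^{k-1} A_{2k-1}/(2k-1)!$ collapses it to $a_{m-2} = 2^{2k}(2^{2k}-1)B_{2k}/(2k)!$, and the two pieces combine to
\begin{equation*}
B(C_m) \;=\; \frac{2^{2k}B_{2k}}{(2k)!}\bigl[-(2^{2k}-1)+(2^{2k}-2)\bigr] \;=\; -\frac{2^m B_m}{m!}.
\end{equation*}
For odd $m \geq 3$ every composition of $m$ has an odd part so $\prod_i a_{j_i}=0$, and $A(C_m)=0$ by Lemma~3(2), matching $B_m=0$. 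The main obstacle is purely bookkeeping: threading the $(-1)^k$ signs and powers of $2$ through both the Euler--Bernoulli conversion and the $\sinh/\cosh$ split; no individual step is conceptually deep once the identification $1+f(x) = \tanh(x)/x$ is spotted.
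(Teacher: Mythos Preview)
Your proof is correct and follows essentially the same approach as the paper: both enumerate breakings as cyclic compositions, invoke the marked-vertex identity $\sum_{|S|=r}\prod a_{j_i} = \tfrac{m}{r}[x^m]f(x)^r$, and pivot on the identification $1+f(x)=\tanh(x)/x$. The only difference is cosmetic: the paper keeps the $r=0$ term as the generating function $-x\tanh x$ and collapses the whole expression to $\sum_m B(C_m)x^m = -x\tanh x - x\tfrac{d}{dx}\log(\tanh x/x) = 1 - x\coth x$, reading off the Bernoulli coefficients in one stroke, whereas you split $\log(\tanh x/x)=\log(\sinh x/x)-\log\cosh x$ and separately convert $a_{m-2}$ via the Euler--Bernoulli relation before recombining.
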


\begin{proof}
Each proper breaking of $C_m$ is an ordered partition of the cycle into $r$ oriented paths, of lengths $m_1 + \dots + m_r = m$. By Lemma~\ref{lemma3},
$$ B(C_m) \;=\; A(C_m) + \sum\limits_{r=1}^m \;(-1)^r \;\frac{m}{r} \sum\limits_{m_1 + \dots + m_r = m} \; A(P_{m_1})A(P_{m_2})\cdots A(P_{m_r}) $$
Indeed, we multiplied by $m$ since we can start the breaking at any vertex in $C_m$, and divided by $r$ to avoid over-counting, since any of the $r$ parts could serve as the starting point. 

We write a generating function.
$$ \sum\limits_{m=1}^{\infty} B(C_m) x^m \;=\; \sum\limits_{m=1}^{\infty} A(C_m) x^m + \sum\limits_{m=1}^{\infty} \;mx^m\; \sum\limits_{r=1}^m \;\frac{(-1)^r}{r} \sum\limits_{\Sigma m_i = m} A(P_{m_1})\cdots A(P_{m_r}) $$
Lemma~\ref{lemmaA} yields generating functions for $A(C_m)$ and $A(P_m)$:
$$ \tanh x = \sum\limits_{m\text{ odd}}^{\infty} \frac{(-1)^{(m-1)/2} A_m x^m}{m!} \;=\; -\sum\limits_{m=1}^{\infty} A(C_m) x^{m-1} \;=\; x + \sum\limits_{m=1}^{\infty} A(P_m) x^{m+1} $$
A sequence of standard manipulations on power series yields:
$$ -x\frac{d}{dx}\log\left(1 + \sum\limits_{m=1}^{\infty}c_m x^m\right) \;=\; \sum\limits_{m=1}^{\infty} m x^m \sum\limits_{r=1}^m\frac{(-1)^r}{r} \sum\limits_{\Sigma m_i = m} c_{m_1}c_{m_2}\cdots c_{m_r} $$  
We substitute the above, and obtain:
$$ \sum\limits_{m=1}^{\infty} B(C_m) x^m \;=\; - x\tanh x - x\frac{d}{dx}\log\left(1 + \frac{\tanh x - x}{x} \right) \;=\; 1 - x \coth x $$
This yields the desired result via another expansion~\cite[p. 42]{jeffrey2007table}: 
$$ x \coth x \;=\; 1 + \sum\limits_{m=2}^{\infty} 2^mB_mx^m/m! $$
\end{proof}

We now proceed from oriented cycles to any union of cycles. An \emph{excedance} in a permutation $\sigma \in S_k$ is a pair $i < \sigma(i)$, while a pair $i > \sigma(i)$ is called a \emph{deficiency}. The number of deficiencies is denoted by $\text{def}(\sigma)$. 

\begin{lemma}\label{lemmaBG} Let $\sigma \in S_k$, and let $G_{\sigma}$ be as defined for (\ref{mu3}).
$$ B(G_{\sigma}) \;=\; \prod_{\gamma \in \text{cycles}(\sigma)} (-1)^{\text{\normalfont{def}}(\gamma)} B(C_{|\gamma|}) $$
\end{lemma}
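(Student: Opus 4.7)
The plan is to reduce the lemma to two structural properties of $B$ that are directly inherited from the corresponding properties of $A$ in Lemma~\ref{lemma3}, and then to match each cycle-component of $G_\sigma$ to a reoriented copy of $C_{|\gamma|}$.

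First I would establish multiplicativity of $B$ over disjoint unions. Every breaking $G \prec G'_1 \sqcup G'_2$ decomposes uniquely as $G_1 \sqcup G_2$ with $G_i \prec G'_i$; since $v(G) = v(G_1) + v(G_2)$ and $A(G) = A(G_1)\,A(G_2)$ by Lemma~\ref{lemma3}(3), the defining sum factors to give $B(G'_1 \sqcup G'_2) = B(G'_1)\,B(G'_2)$. Each cycle $\gamma$ of $\sigma$ contributes a connected component $G_\gamma$ to $G_\sigma$ (an oriented $|\gamma|$-cycle when $|\gamma| \geq 2$, or a self-loop when $|\gamma| = 1$), so $B(G_\sigma) = \prod_{\gamma} B(G_\gamma)$.

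Next I would observe that $B$ inherits the edge-flip property of Lemma~\ref{lemma3}(1): reversing one edge of $G'$ negates $B(G')$. The point is that a breaking only splits vertices while preserving edges and their orientations, so reversing an edge in $G'$ reverses the corresponding edge in every $G \prec G'$, multiplying each $A(G)$ by $-1$ without changing $v(G)$, and hence negates the entire alternating sum. To apply this to $G_\gamma$, write $\gamma = (a_1, a_2, \ldots, a_m)$ with $\sigma(a_i) = a_{i+1}$ cyclically. The ``forward'' cyclic orientation $a_1 \to a_2 \to \cdots \to a_m \to a_1$ is isomorphic to $C_m$ and hence has the same value of $B$. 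In $G_\gamma$, the edge $\{a_i, a_{i+1}\}$ agrees with this forward orientation iff $a_i < a_{i+1}$, i.e.\ iff $a_i$ is an excedance of $\gamma$; it is reversed at exactly the $\text{def}(\gamma)$ remaining positions. Applying the edge-flip property once for each reversed edge gives $B(G_\gamma) = (-1)^{\text{def}(\gamma)} B(C_m)$, and combining with multiplicativity yields the lemma.

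I do not expect a serious obstacle: the only subtlety is to verify that both structural properties of Lemma~\ref{lemma3} lift from $A$ to $B$, which is immediate once one notes that breakings act only on vertices. Degenerate cases are handled automatically: a fixed point gives a self-loop with $A=0$ on both sides, and for a $2$-cycle $(a,b)$ with $a<b$ the two parallel edges of $G_\gamma$ both point $a \to b$, corresponding to one deficiency and hence one edge-flip relative to $C_2$, consistent with the claimed sign.
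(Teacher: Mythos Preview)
Your proposal is correct and follows exactly the paper's approach: use that the properties of Lemma~\ref{lemma3} carry over from $A$ to $B$ (the paper states this just before Lemma~\ref{lemmaB}), then apply multiplicativity over the cycle components and flip the $\text{def}(\gamma)$ reversed edges to reach $C_{|\gamma|}$. Your write-up simply spells out more carefully why multiplicativity and the edge-flip property lift to $B$, which the paper leaves as a one-line remark.
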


\begin{proof}
$B$ is multiplicative at the different cycles by Lemma~\ref{lemma3}. The cyclic graph corresponding to $\gamma \in cycles(\sigma)$ is turned into $C_{|\gamma|}$ by flipping $\text{\normalfont{def}}(\gamma)$ edges, corresponding to deficiencies.
\end{proof}

The cycles structure of a permutation $\sigma \in S_m$ induces a partition $X_1 \cup \dots \cup X_r = [m]$. Given a set $X_i$, denote by $Z(X_i)$ the collection of $(|X_i|-1)!$ cycles composed of these elements. The following lemma provides the average parity of deficiencies in a cycle.

\begin{lemma}\label{lemmaAZ}
$$ \frac{1}{|Z(X)|} \sum_{\gamma \in Z(X)} (-1)^{\text{\normalfont{def}}(\gamma)} \;=\; A(C_{|X|}) $$
\end{lemma}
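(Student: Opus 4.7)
The plan is to observe that both sides of the claimed identity count, under different normalizations, the same sign-weighted enumeration of cyclic descent patterns, and that the bookkeeping works out via the standard $m$-to-$1$ correspondence between linear and cyclic orderings.

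Without loss of generality take $X = \{1, \dots, m\}$, since both sides depend only on $|X|$ (the definitions of $A(C_m)$ and of deficiencies are invariant under relabeling $X$ by its unique order-preserving bijection with $[m]$). First I would unpack the right-hand side. A cycle $\gamma = (a_1, a_2, \dots, a_m) \in Z([m])$ is the permutation sending $a_i \mapsto a_{i+1}$ (indices cyclically), so a position $a_i$ is a deficiency iff $a_{i+1} < a_i$. Hence $\text{def}(\gamma)$ equals the number of cyclic descents of the sequence $(a_1, \dots, a_m, a_1)$, and
\[
\frac{1}{(m-1)!}\sum_{\gamma \in Z([m])} (-1)^{\text{def}(\gamma)} \;=\; \frac{1}{(m-1)!}\sum_{\gamma} (-1)^{\#\{i\,:\,a_i > a_{i+1}\}}.
\]

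Next I would unpack $A(C_m)$. Since $C_m$ has edges $(i,i{+}1)$ for $i=1,\dots,m-1$ together with $(m,1)$, all cyclically oriented,
\[
A(C_m) \;=\; \frac{1}{m!}\sum_{\sigma \in S_m} \prod_{i=1}^{m} \text{sign}\bigl(\sigma(i+1)-\sigma(i)\bigr) \;=\; \frac{1}{m!}\sum_{\sigma \in S_m} (-1)^{\#\{i\,:\,\sigma(i) > \sigma(i+1)\}},
\]
where indices are taken mod $m$. So the summand here is exactly $(-1)^{\text{cyclic descents}}$ of the linear sequence $(\sigma(1), \dots, \sigma(m))$.

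To connect the two, I would use the $m$-to-$1$ surjection from $S_m$ onto the set of cyclic arrangements of $[m]$ obtained by identifying sequences related by cyclic rotation. The number of cyclic descents is clearly invariant under such rotation, so each fiber of size $m$ contributes the same value of $(-1)^{\text{cyc desc}}$. Hence
\[
\sum_{\sigma \in S_m} (-1)^{\text{cyc desc}(\sigma)} \;=\; m \sum_{\gamma \in Z([m])} (-1)^{\text{def}(\gamma)},
\]
and dividing by $m!$ gives $A(C_m) = \frac{1}{(m-1)!}\sum_\gamma (-1)^{\text{def}(\gamma)}$, which is the claim. There is no real obstacle here, only the need to keep straight that the factor $m$ coming from linear-to-cyclic collapse exactly converts $\frac{1}{m!}$ into $\frac{1}{(m-1)!} = \frac{1}{|Z(X)|}$.
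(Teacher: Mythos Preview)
Your argument is correct. The paper takes a slightly different route: it writes each cyclic permutation $\gamma$ on $[m]$ in canonical cycle notation $(1,\gamma(1),\gamma^2(1),\dots)$, thereby setting up a bijection between $Z([m])$ and linear permutations of $\{2,\dots,m\}$ under which deficiencies of $\gamma$ become descents of the linear word (plus one forced deficiency at the wrap-around). This reduces the left-hand side to the alternating Eulerian sum $\sum_d(-1)^d A_{m-1,d}=\pm A_{m-1}$, which was already shown in Lemma~\ref{lemmaA} to equal $(m-1)!\,A(C_m)$. Your approach via the $m$-to-$1$ rotation quotient $S_m\twoheadrightarrow Z([m])$ is more direct: it matches the two averages immediately, without detouring through $S_{m-1}$ or invoking Lemma~\ref{lemmaA}. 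The paper's route has the minor advantage of exhibiting the common value $\pm A_{m-1}/(m-1)!$ explicitly, but that value is not needed downstream, so your self-contained argument is a clean substitute.
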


\begin{proof}
By considering the cyclic notation $(1, \sigma(1), \sigma^2(1), \dots )$, one obtains equivalence between the distribution of descents in all permutations, and deficiencies in cyclic permutations. In particular, the sum of $(-1)^{\text{def}(\sigma)}$ over cyclic $\sigma \in S_m$, is given by $\pm A_{m-1}$, just as in Lemma~\ref{lemmaA}.
\end{proof}

We finally return to (\ref{mu3}) and simplify the limiting moments. By Lemmas~\ref{lemmaBG}-\ref{lemmaAZ}
\begin{align*}
\frac{\mu_k}{2^k} \;&=\; \sum_{\sigma \in S_k} \frac{B(G_{\sigma})}{2^{|\text{cycles}(\sigma)|}} \;=\; \sum\limits_{X_1,\dots,X_r} \;\prod\limits_{i=1}^{r} \;\sum\limits_{\gamma \in Z(X_i)} \frac{(-1)^{\text{\normalfont{def}}(\gamma)} B(C_{|\gamma|})}{2} \;=\; \\
\;&=\; \sum\limits_{X_1,\dots,X_r} \;\prod\limits_{i=1}^{r} \;|Z(X_i)| \cdot \frac{A(C_{|X_i|}) B(C_{|X_i|})}{2} \;=\; \sum_{\sigma \in S_k} \;\prod\limits_{\gamma \in \text{cycles}(\sigma)} \frac{A(C_{|\gamma|}) B(C_{|\gamma|})}{2}
\end{align*}
Substituting $A$ and $B$ from Lemmas~\ref{lemmaA},\ref{lemmaB} proves Theorem~\ref{moments}.

\section{Limit Distribution}\label{section-limit}

We prove Corollary~\ref{limdist}, showing that the distribution of the normalized writhe, $W_{2n+1} = w(\pi)/n$ where $\pi \in S_{2n+1}$ uniformly, weakly converges to a limit $W$. We then study some representations and properties of the limit distribution, and establish its continuity.

\subsection{Proof of Corollary~\ref{limdist}}

Partition the sum over permutations in Theorem~\ref{moments}, according to the cycle lengths.
$$ \mu_k \;=\; \sum_{\sigma \in S_k} \;\prod\limits_{\gamma \in \text{cycles}(\sigma)} \lambda_{|\gamma|} \;=\; \sum_{r=1}^{\infty}\frac{1}{r!}\;\sum_{m_1+\dots+m_r=k}\;\frac{k!}{m_1 m_2 \cdots m_r} \lambda_{m_1}\lambda_{m_2}\cdots\lambda_{m_r} $$
Indeed, every \emph{ordered} set of $r$ cycles that constitute $\sigma$ will be counted $r!$ times, hence the factor $(1/r!)$. If the cycle lengths are $m_1,\dots,m_r$, then the number of assignments of numbers $\{1,\dots,k\}$ to the elements of all cycles is $k!/m_1 m_2 \cdots m_r$, because of equivalence to rotation of each cycle. Thus the moment generating function is
$$ \mathcal{M}(z) \;=\; \sum_{k=0}^{\infty} \frac{\mu_k}{k!} z^k \;=\; 1 + \sum_{r=1}^{\infty}\frac{1}{r!} \;\prod_{j=1}^r \;\sum_{m_j=1}^{\infty} \frac{\lambda_{m_j}}{m_j} z^{m_j} \;=\; \exp\left(\sum_{m=1}^{\infty} \frac{\lambda_m}{m} z^m \right) $$
Since asymptotically $|B_m| \sim 2 \cdot m! /(2\pi)^m$ for even $m \to \infty$, we have~\cite[p. 1041]{jeffrey2007table} 
$$ \lambda_m \;=\; \frac{8^m(2^m-1)B_m^2}{2(m!)^2} \;\sim\; 2\left(\frac{2}{\pi}\right)^{2m} $$ 
Hence $\log\mathcal{M}(z)$ has radius of convergence $\pi^2/4$. This implies:
\begin{enumerate}
\item The moment generating function $\mathcal{M}(z)$ is analytic at $z=0$.
\item A limit distribution $W$ exists, uniquely defined by its moments $\mu_k$~(e.g. \cite{breiman1992probability, durrett2010probability}).
\item The tails of $W$ decay at exponential asymptotic rate, $\frac{1}{t} \log P\left[|W|>t\right] \xrightarrow{\;t \to \infty\;} -\pi^2/4$.
\end{enumerate}
The continuity of $W$ is inferred from its series representation below.
\hfill\qed

\subsection{Integral Representation}

Consider the characteristic function of~$W$: 
$$ \varphi_W(t) \;=\; \exp\left( \sum_{m=2}^{\infty} \frac{8^m(2^m-1)B_m^2}{2m(m!)^2} (it)^m \right) $$
It turns out that the infinite series can be replaced with an integral. By the expansions~\cite[p. 42]{jeffrey2007table}
$$ t \cot 2t \;=\; \sum_{m=0}^{\infty} \frac{(4i)^m B_m}{2\cdot m!}t^m \;\;\;\;\;\;\;\; \log \cosh t \;=\; \int\limits_0^t \tanh u du \;=\; \sum_{m=2}^{\infty} \frac{2^m(2^m-1)B_m}{m!}
\cdot\frac{t^m}{m} $$
we represent the characteristic function as a Hadamard product,
$$ \varphi_W(t) \;=\; \exp \left( t \cot 2t \circ \log \cosh t \right) \;\;\;\;\;\;\text{where}\;\;\;\;\;\; \left(\sum a_n t^n \right) \circ \left(\sum b_n t^n \right)\;=\; \left(\sum a_n b_n t^n \right) $$
which equals the integral
$$ (f \circ g)(t) \;=\; \int_0^1 f\left(e^{2\pi iu}\right)g\left(te^{-2\pi iu}\right)du \;=\; \frac{1}{2 \pi i} \oint_{|z|=1} f(z)g\left(\tfrac{t}{z}\right) \tfrac{dz}{z} $$
Therefore, the characteristic function of $W$ is given by the contour integral
$$ \varphi_W(t) \;=\; \exp\left( \frac{1}{2 \pi i} \oint_{|z|=1} \cot (2z) \; \log \cosh (t/z) \;dz\right) $$

\subsection{Series Representations}

We study the characteristic function of $W$ by another approach. Recall that the Bernoulli numbers appear as special values of the Riemann zeta function. For even $m \geq 2$,
$$ \zeta(m) = \sum\limits_{n=1}^{\infty}\left(\frac{1}{n}\right)^m \;=\; -\frac{(2\pi i)^m B_m}{2 \cdot m!} $$
Hence $\varphi_W(t) =$
$$ \exp\left( -\sum_{m=2}^{\infty} \frac{4^m(2^m-1)B_m t^m}{m \cdot m! \cdot \pi^m} \sum\limits_{n=1}^{\infty}\left(\frac{1}{n}\right)^m \right) \;=\;
\exp\left( -\sum\limits_{n=1}^{\infty} \sum_{m=2}^{\infty} \frac{2^m(2^m-1)B_m}{m \cdot m! } \left(\frac{2t}{\pi n}\right)^m \right) $$
By the above-mentioned expansion of $\log \cosh$,
$$ \varphi_W(t) \;=\; \exp\left( -\sum\limits_{n=1}^{\infty} \log \cosh \left(\frac{2t}{\pi n}\right) \right) \;=\; \prod_{n=1}^{\infty} \text{sech}\,\left(\frac{2t}{\pi n}\right) \;=\; \prod_{n=1}^{\infty} \varphi_A\left(\frac{4t}{\pi^2 n}\right) $$
where $A$ is a random variable with characteristic function $\varphi_A(u) = \text{sech}\,\frac{\pi u}{2}$, and density function $f_A(x) = \tfrac{1}{\pi}\,\text{sech}\,x$.
By L\'{e}vy's Continuity Theorem,
$$ W \;\sim\; \frac{4}{\pi^2} \;\sum\limits_{n=1}^{\infty} \;\frac{A_n}{n} \;\;\;\;\;\;\;\; \text{where} \;\; A_n \sim A \;\; \text{iid} \;.$$
It follows that the limit distribution $W$ is absolutely continuous with respect to the Lebesgue measure. Indeed, $A_1$ has a density function, and $W$ is obtained from $A_1$ by convolution with another distribution.

This decomposition of $W$ can be further refined. A similar manipulation on the characteristic function of $A$ yields~\cite[p. 45]{jeffrey2007table}
$$ \varphi_A\left(t\right) \;=\; \text{sech}\,\left(\frac{\pi t}{2}\right) \;=\; \prod_{k=1}^{\infty} \left(1 + \frac{t^2}{(2k-1)^2}\right)^{-1} \;=\; \prod_{k=1}^{\infty} \varphi_L\left(\frac{t}{2k-1}\right) $$
where $\varphi_L(u) = 1/(1+u^2)$ is the characteristic function of the Laplace distribution, with probability density function $f_L(x) = \tfrac12 e^{-|x|}$. Equivalent definitions are $L \sim (E_1 - E_2) \sim (Z_1Z_2 + Z_3Z_4)$ where $E_i$ and $Z_i$ are respectively iid exponential and iid normal variables. In conclusion,
$$ W \;\sim\; \frac{4}{\pi^2} \;\;\sum\limits_{\stackrel{n=1}{\;}}^{\infty} \;\; \sum\limits_{\stackrel{m=1}{odd}}^{\infty} \;\frac{L_{mn}}{mn} \;\;\;\;\;\;\;\; \text{where} \;\; L_{mn} \sim L \;\; \text{iid} \;.$$

Another decomposition is manifest in the fact that $W$ is \emph{infinitely divisible}, which means that for every $n$, there exist $X_1,\dots,X_n$ iid such that $X_1 + \cdots + X_n \sim W$. This follows since the exponential distribution has this property, which is preserved when taking independent sums or limits. See an analogous treatment of the logistic distribution in~\cite{steutel1979infinite}. 

Alternatively, we derive the L\'{e}vy--Khinchin representation,
$$ \varphi_W\left(t\right) \;=\; \exp \left( \int_{0}^{\infty} (\cos ut - 1) m_W(u) du \right) 
\;\;\;\;\;\;\;\; \text{where} \;\; m_W(u) \;=\; \frac{1}{u} \sum_{n=1}^{\infty}\frac{1}{\sinh \left(n \pi^2 u / 4\right)} \;,$$
obtained from the corresponding expression for $\varphi_A(t)$ with $m_A(u) = 1/ u \sinh u$. See~\cite{levy1951wiener}.

\subsection{Remark}

Note the similarity between the limit of the writhe and that of Fisher and Lee's statistic~$\Delta_N$ from Section~\ref{stat}. The summation of $\tfrac{1}{mn}L_{mn}$, there over $m ,n \in \N$, is here restricted to odd $m$. Further investigation of this comparison suggests a limit theorem for circular rank correlation, which we sketch below. 

Recall the gerneral form $R_{fg}$ for circular rank correlation, from Section~\ref{stat}. For both the writhe and $\Delta_N$, the normalized limit distribution is given by $\frac14 \sum_{m,n} \lambda_m \kappa_n L_{mn}$, where $\lambda_m$ and $\kappa_n$ are the respective Fourier coefficients of $f$ and $g$, i.e., $f(t) = \sum_m \lambda_m \sin(2 \pi m t)$ and $g(t) = \sum_n \kappa_n \sin(2 \pi n t)$. From the theory of $U$-statistics, we know that this is indeed the case for the \emph{parametric} statistic
$$U_{fg} \;=\; \sum_{i<j} \; f(\theta_j-\theta_i) \; g(\phi_j-\phi_i)$$ 
under the hypothesis that all $\theta_i$ and $\phi_i$ are iid uniform in~$[0,1]$. The joint distribution of the normalized ranks $\tfrac{1}{N}r_i$ and $\tfrac{1}{N}s_i$ that appear in $R_{fg}$ instead, is more complicated. However, some condition on $f$ and $g$ would ensure that $\frac{1}{N}(R_{fg}-U_{fg}) \to 0$ in probability, and hence equality of the limit laws.

\bibliographystyle{alpha}
{ \footnotesize
\bibliography{writhe}
}

\end{document}